\newtheorem{theorem}{Theorem}[section]
\newtheorem{lemma}[theorem]{Lemma}
\theoremstyle{definition}
\newtheorem{definition}[theorem]{Definition}
\numberwithin{equation}{section}
\def \eN{\mbox{E.~Ntienjem}}
\def \waS{\mbox{W.~A.~Stein}}
\def \mN{\mbox{M.~Newman}}
\def \gK{\mbox{G.~K\"{o}hler}}
\def \ljpK{\mbox{L.~J.~P.~Kilford}}
\def \aA{\mbox{A.~Alaca}}
\def \sA{\mbox{\c{S}.~Alaca}}
\def \ksW{\mbox{K.~S.~Williams}}
\def \eR{\mbox{E.~Royer}}
\def \nK{\mbox{N.~Koblitz}}
\def \E{\mbox{$ I\!\! E$}}
\def \M{\mbox{$ I\!\! M$}}
\author{Eb\'{e}n\'{e}zer Ntienjem}
 \address{
Centre for Research in Algebra and Number Theory \\
 School of Mathematics and Statistics\\
 Carleton University\\
 1125 Colonel By Drive\\
 Ottawa, Ontario, K1S 5B6, Canada
}
\email{ebenezer.ntienjem@carleton.ca;ntienjem@gmail.com}
\keywords{Sums of Divisors function; Convolution Sums; Dedekind eta function; 
Modular Forms; Eisenstein Series; Cusp Forms; Octonary quadratic Forms; 
Number of Representations}
\subjclass[2010]{11A25, 11E20, 11E25, 11F11, 11F20, 11F27}
\begin{document}

\title[Evaluation of the Convolution Sums for 44 and 52]
{Evaluation of the Convolution Sums 
$\underset{\substack{
 {(l,m)\in\mathbb{N}_{0}^{2}} \\ {\alpha\,l+\beta\,m=n}
} }{\sum}\sigma(l)\sigma(m)$, 
where $\alpha\beta=44,52$
}

\begin{abstract}
The convolution sum, 
$\underset{\substack{
 {(l,m)\in\mathbb{N}_{0}^{2}} \\ {\alpha\,l+\beta\,m=n}
} }{\sum}\sigma(l)\sigma(m)$, 
where $\alpha\beta=44,52$, is evaluated for all natural numbers $n$. 
We then use these convolution sums to determine formulae for the number 
of representations of a natural number 
by the octonary quadratic forms
$a\,(x_{1}^{2} + x_{2}^{2} + x_{3}^{2} + x_{4}^{2})+ b\,(x_{5}^{2} + x_{6}^{2}
  + x_{7}^{2} + x_{8}^{2})$, 
where $(a,b)= (1,11),(1,13)$. 
\end{abstract}

\maketitle

\section{Introduction} \label{introduction}

The sets of
natural numbers, non-negative integers, integers, rational numbers, real 
numbers and complex numbers, are denoted by $\mathbb{N}$, $\mathbb{N}_{0}$, 
$\mathbb{Z}$, $\mathbb{Q}$, $\mathbb{R}$ and $\mathbb{C}$, respectively. 

Suppose that $k,n\in\mathbb{N}$. We define the sum of positive divisors 
of $n$ to the power of $k$, $\sigma_{k}(n)$, by 
\begin{equation} \label{def-sigma_k-n}
 \sigma_{k}(n) = \sum_{0<d|n}d^{k}.
 \end{equation} 
We  write $\sigma(n)$ as a synonym for $\sigma_{1}(n)$ and we set 
$\sigma_{k}(m)=0$ if $m\notin\mathbb{N}$. 

The convolution sum, 
 $W_{(\alpha,\beta)}(n)$, is defined for all 
$\alpha,\beta\in\mathbb{N}$ such that $\alpha\leq\beta$ 
 as follows: 
 \begin{equation} \label{def-convolution_sum}
 W_{(\alpha, \beta)}(n) =  \sum_{\substack{
 {(l,m)\in\mathbb{N}_{0}^{2}} \\ {\alpha\,l+\beta\,m=n}
} }\sigma(l)\sigma(m).
 \end{equation}
We write $W_{\beta}(n)$ as a short hand for $W_{(1,\beta)}(n)$.

For those convolution sums 
$W_{(\alpha, \beta)}(n)$ that have so far been evaluated, the values of 
$(\alpha,\beta)$ are given in \autoref{introduction-table-1}. 
We evaluate the convolution sums for $(\alpha,\beta)=(1,44)$, $(4,11)$, 
$(1,52)$, $(4,13)$,
i.e., $\alpha\beta=44$ and $\alpha\beta=52$. The evaluation of these 
convolution sums have not been done yet according to 
\autoref{introduction-table-1}. 

Let $a,b,c,d\in \mathbb{N}$ be such that $\gcd(a,b)=1$ and $\gcd(c,d)=1$. 
The convolution sums are generally used to
determine explicit formulae for the number of representations of a positive 
integer $n$ by the octonary quadratic forms 
\begin{equation} \label{introduction-eq-1}
a\,(x_{1}^{2} +x_{2}^{2} + x_{3}^{2} + x_{4}^{2})+ b\,(x_{5}^{2} + x_{6}^{2} + 
x_{7}^{2} + x_{8}^{2}),
\end{equation}
and 
\begin{equation} \label{introduction-eq-2}
c\,(x_{1}^{2} + x_{1}x_{2} + x_{2}^{2} + x_{3}^{2} + x_{3}x_{4} + x_{4}^{2})
+ d\,(x_{5}^{2} + x_{5}x_{6}+ x_{6}^{2} + x_{7}^{2} + x_{7}x_{8}+ 
x_{8}^{2}),
\end{equation}
respectively. 

We use the evaluated convolution sums and other known convolution sums to 
determine formulae for the number of representations 
of a positive integer $n$ by the octonary quadratic form
\autoref{introduction-eq-1} for which $(a,b)=(1,11),
(1,13)$. These number of representations are also new according 
to \autoref{introduction-table-rep2} which displays known explicit 
formulae for the number of representations 
of $n$ by the octonary form \autoref{introduction-eq-1}.

We have organized this paper as follows. 
In \hyperref[modularForms]{Section \ref*{modularForms}} we briefly discuss 
modular forms and define eta functions and convolution sums. Then in 
\hyperref[convolution_44_52]{Section \ref*{convolution_44_52}} we 
discuss our main results on the evaluation of the convolution sums;  
the main results on the formulae for the number of representations of a positive 
integer $n$ are given in 
\hyperref[representations_44_52]{Section \ref*{representations_44_52}}. 

We use a software for symbolic scientific computation to obtain the results 
of this paper. The open source software packages 
\emph{GiNaC}, \emph{Maxima}, \emph{REDUCE}, \emph{SAGE} and the commercial 
software package \emph{MAPLE} build this software.

%%%%%%%%%%%  Modular Forms %%%%%%%%%%%%

\section{Preliminaries} \label{modularForms}

We consider the upper half-plane,  
$\mathbb{H}=\{ z\in \mathbb{C}~ | ~\text{Im}(z)>0\}$, 
and the group  $G=\text{SL}_{2}(\mathbb{R})$ of $2\times 2$-matrices 
\begin{math}\left(\begin{smallmatrix} a & b \\ c &
 d \end{smallmatrix}\right)\end{math} such that $a,b,c,d\in\mathbb{R}$ and 
$ad-bc=1$. Let $\Gamma=\text{SL}_{2}(\mathbb{Z})$ be a 
subset of $G$ and let $N\in\mathbb{N}$. Then  
\begin{eqnarray*}
\Gamma(N) & = \bigl\{~\left(\begin{smallmatrix} a & b \\ c &
  d \end{smallmatrix}\right)\in\text{SL}_{2}(\mathbb{Z})~ |
  ~\left(\begin{smallmatrix} a & b \\ c &
 d\end{smallmatrix}\right)\equiv\left(\begin{smallmatrix} 1 & 0 \\ 0 & 1 
 \end{smallmatrix}\right) \pmod{N} ~\bigr\}
\end{eqnarray*}
is a subgroup of $\Gamma$. The subgroup $\Gamma(N)$ is called a \emph{principal congruence subgroup of 
level N}. If a subgroup $H$ of $G$ contains $\Gamma(N)$, then it is a 
\emph{congruence subgroup of level N}.

For our purpose we consider the congruence subgroup 
 \begin{align*}
\Gamma_{0}(N) & = \bigl\{~\left(\begin{smallmatrix} a & b \\ c &
  d \end{smallmatrix}\right)\in\text{SL}_{2}(\mathbb{Z})~ | ~
   c\equiv 0 \pmod{N} ~\bigr\}. 
\end{align*}
Let $k\in\mathbb{Z}, \gamma\in\Gamma$ and $f^{[\gamma]_{k}} : 
\mathbb{H}\cup\mathbb{Q}\cup\{\infty\} \rightarrow 
\mathbb{C}\cup\{\infty\}$ be   
the function whose value at $z$ is 
$f^{[\gamma]_{k}}(z)=(cz+d)^{-k}f(\gamma(z))$. 
The following definition is based on \nK 's textbook \cite[p. 108]{koblitz-1993}.
\begin{definition} \label{modularForms-defin-2}
Let $N\in\mathbb{N}$, $k\in\mathbb{Z}$, $f$ be a meromorphic function 
on $\mathbb{H}$ and $\Gamma'\subset\Gamma$ be a congruence subgroup of 
level $N$. 
\begin{enumerate}
\item[(a)] $f$ is a \emph{modular function of weight $k$} for 
$\Gamma'$ if
\begin{enumerate}
\item[(a1)] %it holds that 
$f^{[\gamma]_{k}}=f$ for all $\gamma\in\Gamma'$,  
\item[(a2)] for any $\delta\in\Gamma$ it holds that $f^{[\delta]_{k}}(z)$ 
can be expressed in the form 
$\underset{n\in\mathbb{Z}}{\sum}a_{n}e^{\frac{2\pi i z n}{N}}$, 
wherein $a_{n}\neq 0$ for finitely many $n\in\mathbb{Z}$ such that $n<0$. 
\end{enumerate}
\item[(b)] $f$ is a \emph{modular form of weight $k$} for 
$\Gamma'$ if
	\begin{enumerate}
	\item[(b1)] $f$ is a modular function of weight $k$ for $\Gamma'$,
	\item[(b2)] $f$ is holomorphic on $\mathbb{H}$,
	\item[(b3)] $a_{n}=0$ for all $\delta\in\Gamma$ and for all $n\in\mathbb{Z}$ 
such that $n<0$.
	\end{enumerate}
\item[(c)] $f$ is a \emph{cusp form of weight $k$ for $\Gamma'$} if
	\begin{enumerate}
	\item[(c1)] $f$ is a modular form of weight $k$ for $\Gamma'$, 
	\item[(c2)] $a_{0}=0$ for all $\delta\in\Gamma$.
	\end{enumerate}
\end{enumerate}
\end{definition}
Let $k,N\in\mathbb{N}$. We denote by $\M_{k}(\Gamma_{0}(N))$ be the space of 
modular forms of weight $k$ for 
$\Gamma_{0}(N)$, $\EuScript{S}_{k}(\Gamma_{0}(N))$ the subspace of 
cusp forms of weight $k$ for $\Gamma_{0}(N)$, 
and $\E_{k}(\Gamma_{0}(N))$ the subspace of Eisenstein forms of 
weight $k$ for $\Gamma_{0}(N)$. 
In W.~A.~Stein's book (online version) \cite[p.~81]{wstein} it is shown that 
$\M_{k}(\Gamma_{0}(N)) =
\E_{k}(\Gamma_{0}(N))\oplus\EuScript{S}_{k}(\Gamma_{0}(N))$.  

According to Section 5.3 of W.~A.~Stein's book \cite[p.~86]{wstein} 
$E_{k}(q) = 1 - \frac{2k}{B_{k}}\,\underset{n=1}{\overset{\infty}{\sum}}\,
\sigma_{k-1}(n)\,q^{n}$, where $B_{k}$ are the Bernoulli numbers, if the 
primitive Dirichlet characters are trivial and $2\leq k$ is even.

We only consider trivial primitive Dirichlet characters and $4\leq k$ even in the 
sequel. Based on this consideration Theorems 5.8 and 5.9 in Section 5.3 of 
\cite[p.~86]{wstein} also hold.

\subsection{Eta Functions}  \label{etaFunctions}

On the upper half-plane $\mathbb{H}$ the Dedekind eta function, $\eta(z)$,  
is defined by 
$\eta(z) = e^{\frac{2\pi i z}{24}}\overset{\infty}{\underset{n=1}{\prod}}(1 - 
e^{2\pi i n z})$.
When we set $q=e^{2\pi i z}$, then 
\begin{equation*}
\eta(z) = q^{\frac{1}{24}}\overset{\infty}{\underset{n=1}{\prod}}(1 - q^{n}) 
= q^{\frac{1}{24}} F(q),\qquad
\text{ where } F(q)=\overset{\infty}{\underset{n=1}{\prod}}(1 - q^{n}).
\end{equation*}
 
\ljpK 's book 
\cite[p.~99]{kilford} and \gK 's book \cite[p.~37]{koehler} have a proof 
of the following theorem which  
we will apply to determine eta functions which 
belong to $\M_{k}(\Gamma_{0}(N))$, and particularly those eta functions 
that belong to $\EuScript{S}_{k}(\Gamma_{0}(N))$. As noted by \aA\ et al.\ 
\cite{alaca_alaca_ntienjem2016} credit to this theorem also goes to 
\mN\ \cite{newman_1957,newman_1959}.
\begin{theorem}[M.~Newman and G.~Ligozat] \label{ligozat_theorem} 
Let $N\in \mathbb{N}$  and let  
$f(z)=\overset{}{\underset{1\leq\delta|N}{\prod}}\eta^{r_{\delta}}(\delta z)$ 
be an eta function which satisfies the following conditions: 

\begin{tabular}{llll}
{\textbf{(i)}} & $\overset{}{\underset{1\leq\delta|N}{\sum}}\delta\,r_{\delta} 
	\equiv 0 \pmod{24}$, & 
{\textbf{(ii)}} &  $\overset{}{\underset{1\leq\delta|N}{\sum}}\frac{N}{\delta} 
	\,r_{\delta} \equiv 0 \pmod{24}$, \\
{\textbf{(iii)}} &  $\overset{}{\underset{1\leq\delta|N}
{\prod}}\delta^{r_{\delta}}$ \text{ is a square in } $\mathbb{Q}$, & 
{\textbf{(iv)}} &  $k=\frac{1}{2}\overset{}{\underset{1\leq\delta|N}
	{\sum}}r_{\delta}$ \text{ is an even integer,} \\
\end{tabular}

\textbf{(v)} for each positive divisor $d$ of $N$, the inequality 
	$\overset{}{\underset{1\leq\delta|N}{\sum}}\frac{\text{gcd}(\delta,d)^{2}}	
	{\delta} r_{\delta} \geq 0$ holds.

Then $f(z)\in \M_{k}(\Gamma_{0}(N))$. 

If \textbf{(v)} is replaced by 

\textbf{(v')} for each positive divisor $d$ of $N$ the inequality 
	$\overset{}{\underset{1\leq\delta|N}{\sum}}\frac{\text{gcd}(\delta,d)^{2}}	
		{\delta} r_{\delta} > 0$ holds 
		
then $f(z)\in \EuScript{S}_{k}(\Gamma_{0}(N))$.
\end{theorem}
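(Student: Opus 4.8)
The plan is to build the eta quotient's modularity and growth behavior directly out of the transformation law of the single factor $\eta$. Recall that for any $\gamma=\left(\begin{smallmatrix} a & b \\ c & d\end{smallmatrix}\right)\in\Gamma$ with $c>0$ one has $\eta(\gamma z)=\varepsilon(\gamma)\,(cz+d)^{1/2}\eta(z)$, where $\varepsilon(\gamma)$ is an explicit $24$-th root of unity expressible through a Dedekind sum. I would take this transformation law as the single nontrivial input; everything else reduces to bookkeeping with the exponents $r_{\delta}$. The structural requirements to verify are: (a) the invariance $f^{[\gamma]_{k}}=f$ for every $\gamma\in\Gamma_{0}(N)$ with the correct integer weight $k$, (b) holomorphy on $\mathbb{H}$, and (c) control of the order of $f$ at each cusp.

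For (a) I would write $f(\gamma z)=\overset{}{\underset{1\leq\delta|N}{\prod}}\eta^{r_{\delta}}(\delta\,\gamma z)$ and, for each $\delta$, rewrite $\delta\,\gamma z$ as $\gamma_{\delta}(\delta z)$ for a suitable $\gamma_{\delta}\in\Gamma$, using $c\equiv 0\pmod{N}$ together with $\delta\mid N$ to keep the lower-left entry integral. Applying the eta transformation factor by factor produces an automorphy factor $(cz+d)^{\frac{1}{2}\sum_{\delta}r_{\delta}}$ times an accumulated product of roots of unity. Condition \textbf{(iv)} identifies the exponent as the even integer weight $k$, while \textbf{(i)}, \textbf{(ii)}, \textbf{(iii)} are precisely what trivialize the multiplier system on $\Gamma_{0}(N)$: \textbf{(i)} governs the translation $z\mapsto z+1$ through the $q$-expansion exponent $\frac{1}{24}\sum_{\delta}\delta\,r_{\delta}$, \textbf{(ii)} controls the behavior at the cusp $0$ via the Fricke-type quantity $\sum_{\delta}\frac{N}{\delta}r_{\delta}$, and \textbf{(iii)}, the squareness of $\overset{}{\underset{1\leq\delta|N}{\prod}}\delta^{r_{\delta}}$, makes the nebentypus character (a Jacobi symbol in $\prod_{\delta}\delta^{r_{\delta}}$, whose quadratic part survives only when $k$ is even) trivial. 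Together these yield $f^{[\gamma]_{k}}=f$ on $\Gamma_{0}(N)$.

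Holomorphy on $\mathbb{H}$ in (b) is immediate, since $\eta$ is holomorphic and nowhere zero on $\mathbb{H}$, so every power $\eta^{r_{\delta}}(\delta z)$ — even for negative $r_{\delta}$ — and hence the finite product, is holomorphic there. The substance lies in (c). Here I would invoke the cusp-order formula: at a cusp $a/c$ of $\Gamma_{0}(N)$, the order of $f$ in the local uniformizer equals a strictly positive constant depending only on $N$ and $c$, times $\overset{}{\underset{1\leq\delta|N}{\sum}}\frac{\text{gcd}(c,\delta)^{2}}{\delta}r_{\delta}$. Because the relevant quantity $\gcd(c,N)$ ranges exactly over the positive divisors $d$ of $N$ as $a/c$ runs through the cusps, the sums in \textbf{(v)} and \textbf{(v')} are exactly the cusp orders up to positive scaling. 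Thus \textbf{(v)} guarantees $f$ is holomorphic at every cusp, giving $f\in\M_{k}(\Gamma_{0}(N))$, while the strict inequality in \textbf{(v')} makes every cusp order positive, so $f$ vanishes at all cusps and $f\in\EuScript{S}_{k}(\Gamma_{0}(N))$.

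The main obstacle I anticipate is establishing the cusp-order formula cleanly rather than the formal modularity check. It requires a complete, irredundant set of cusp representatives for $\Gamma_{0}(N)$, a scaling matrix carrying $\infty$ to each cusp, and the leading local expansion of each $\eta(\delta z)$ after that scaling, throughout which the Dedekind-sum multiplier $\varepsilon$ must be tracked with care. This is the technical heart of the argument, and it is the part for which I would lean on the treatments in $\ljpK$'s and $\gK$'s books and the original work of $\mN$ and $\gL$.
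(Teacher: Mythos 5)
The paper never proves this theorem, so there is no internal proof to measure your attempt against: the result is quoted as a known criterion of M.~Newman and G.~Ligozat, with the reader referred to Kilford's book (p.~99) and K\"ohler's book (p.~37) for proofs. Your outline is, in substance, a faithful reconstruction of the argument those sources give. The conjugation step is right: for $\gamma=\left(\begin{smallmatrix} a & b \\ c & d\end{smallmatrix}\right)\in\Gamma_{0}(N)$ and $\delta\mid N$ one has $\delta\,\gamma z=\gamma_{\delta}(\delta z)$ with $\gamma_{\delta}=\left(\begin{smallmatrix} a & b\delta \\ c/\delta & d\end{smallmatrix}\right)\in\Gamma$, integrality of $c/\delta$ coming exactly from $\delta\mid N\mid c$, and multiplying the factors gives the automorphy exponent $k=\frac{1}{2}\sum_{\delta}r_{\delta}$; holomorphy on $\mathbb{H}$ is immediate since $\eta$ is nonvanishing there; and the reduction of conditions \textbf{(v)}, \textbf{(v')} to cusp orders via representatives $a/c$ with $c\mid N$ is the correct mechanism. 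Two points, however, remain black boxes in your write-up, and they are precisely the content of the theorem. First, you assert that \textbf{(i)}--\textbf{(iv)} ``trivialize the multiplier system,'' but the actual proof must show, from the explicit Dedekind-sum formula for $\varepsilon(\gamma_{\delta})$, that the accumulated product $\prod_{\delta}\varepsilon(\gamma_{\delta})^{r_{\delta}}$ collapses under \textbf{(i)} and \textbf{(ii)} to the Kronecker symbol $\bigl(\tfrac{(-1)^{k}\prod_{\delta}\delta^{r_{\delta}}}{d}\bigr)$, which \textbf{(iii)} and \textbf{(iv)} then force to be identically $1$; your heuristic assignment of \textbf{(i)} to the translation and \textbf{(ii)} to the cusp $0$ is a plausibility argument, not that computation. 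Second, the cusp-order formula (order at $a/c$ equal to a positive constant times $\sum_{\delta}\gcd(c,\delta)^{2}r_{\delta}/\delta$) is stated, not derived, and you explicitly defer it to the literature. Since these are exactly the lemmas the paper outsources to Kilford, K\"ohler, Newman and Ligozat, your proposal goes further than the paper does (which proves nothing), but as a standalone proof it is an outline whose two load-bearing steps are cited rather than established.
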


\subsection{Evaluating $W_{(\alpha, \beta)}(n)$}
\label{convolutionSumsEqns}

Let $\alpha,\beta\in\mathbb{N}$ be such that $\alpha\leq\beta$. 
We let the convolution sum, 
 $W_{(\alpha,\beta)}(n)$, be defined as in \autoref{def-convolution_sum}.

Following the observation by A.~Alaca et al.\ \cite{alaca_alaca_williams2006}, 
we assume that $\text{gcd}(\alpha,\beta)=1$.  
Suppose that $q\in\mathbb{C}$ is such that $|q|<1$. 
 We define the Eisenstein series $L(q)$ and $M(q)$ by 
\begin{align}  
L(q) = E_{2}(q) = 1-24\,\sum_{n=1}^{\infty}\sigma(n)q^{n}, 
\label{evalConvolClass-eqn-3} \\
M(q) = E_{4}(q) = 1 + 240\,\sum_{n=1}^{\infty}\sigma_{3}(n)q^{n}. 
\label{evalConvolClass-eqn-4}
\end{align}
The following two results whose proofs are given by \aA\ et al.\ 
\cite{alaca_alaca_ntienjem2016} are essential for the sequel of this work 
\begin{lemma}  \label{evalConvolClass-lema-1}
Let $\alpha, \beta \in \mathbb{N}$. Then 
\begin{equation*}
( \alpha\, L(q^{\alpha}) - \beta\, L(q^{\beta}) )^{2}\in
\M_{4}(\Gamma_{0}(\alpha\beta)).
\end{equation*}
\end{lemma}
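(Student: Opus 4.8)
The plan is to show first that the weight-$2$ ``square root''
\[
\Phi(z) := \alpha\,L(q^{\alpha}) - \beta\,L(q^{\beta}),
\]
where $q = e^{2\pi i z}$ so that $L(q^{t}) = E_{2}(tz)$ in the classical normalization, lies in $\M_{2}(\Gamma_{0}(\alpha\beta))$, and then to deduce the claim by squaring, using that the product of two modular forms of weight $2$ for $\Gamma_{0}(\alpha\beta)$ is a modular form of weight $4$ for $\Gamma_{0}(\alpha\beta)$.

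The only delicate point is that $E_{2}$ is merely quasimodular: for $\gamma=\left(\begin{smallmatrix} a & b \\ c & d\end{smallmatrix}\right)\in\Gamma$ one has
\[
E_{2}\!\left(\tfrac{az+b}{cz+d}\right) = (cz+d)^{2}E_{2}(z) + \tfrac{12}{2\pi i}\,c\,(cz+d).
\]
First I would fix $\gamma\in\Gamma_{0}(\alpha\beta)$ and, for each $t\in\{\alpha,\beta\}$, rewrite $t\,E_{2}(t\,\gamma z)$ by conjugating $\gamma$ into the matrix $\left(\begin{smallmatrix} a & tb \\ c/t & d\end{smallmatrix}\right)$, which again lies in $\Gamma$ because $t\mid\alpha\beta\mid c$ and its determinant is $ad-bc=1$. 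Substituting $w = tz$ and using $(c/t)w+d = cz+d$, the quasimodular formula yields
\[
t\,E_{2}(t\,\gamma z) = t\,(cz+d)^{2}E_{2}(tz) + \tfrac{12}{2\pi i}\,c\,(cz+d),
\]
in which --- and this is the crux --- the factor $t$ cancels against the $1/t$ in $c/t$, so the anomalous additive term $\tfrac{12}{2\pi i}c(cz+d)$ is \emph{independent of $t$}. Subtracting the $t=\beta$ identity from the $t=\alpha$ identity makes the two anomalous terms cancel, giving $\Phi(\gamma z) = (cz+d)^{2}\Phi(z)$, i.e.\ $\Phi^{[\gamma]_{2}}=\Phi$ for every $\gamma\in\Gamma_{0}(\alpha\beta)$; thus $\Phi$ satisfies the weight-$2$ automorphy condition \textbf{(a1)}.

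It remains to check holomorphy. Holomorphy on $\mathbb{H}$ is immediate since each $E_{2}(tz)$ is holomorphic there. For the cusps I would examine the expansion of $\Phi^{[\delta]_{2}}$ for each $\delta\in\Gamma$: since $L(q^{t}) = 1 - 24\sum_{n\ge 1}\sigma(n)q^{tn}$ involves only non-negative powers of $q$ with polynomially bounded coefficients, the transformed expansions at every cusp have no principal part, which gives conditions \textbf{(a2)}, \textbf{(b2)} and \textbf{(b3)}, so $\Phi\in\M_{2}(\Gamma_{0}(\alpha\beta))$. Finally, since automorphy factors multiply, $(\Phi^{2})^{[\gamma]_{4}} = (\Phi^{[\gamma]_{2}})^{2} = \Phi^{2}$ on $\Gamma_{0}(\alpha\beta)$, and squaring preserves holomorphy on $\mathbb{H}$ and at the cusps; hence $(\alpha L(q^{\alpha}) - \beta L(q^{\beta}))^{2} = \Phi^{2}\in\M_{4}(\Gamma_{0}(\alpha\beta))$. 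The main obstacle is the bookkeeping around the $E_{2}$ anomaly: getting the conjugated matrix and the substitution $w=tz$ exactly right so that the extra terms coincide for $t=\alpha$ and $t=\beta$ and cancel; once that is in place, the rest is routine.
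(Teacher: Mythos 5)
Your proof is correct, and in substance it rests on the same pivot as the paper's: the weight-$2$ combination $\Phi(z)=\alpha L(q^{\alpha})-\beta L(q^{\beta})$ lies in $\M_{2}(\Gamma_{0}(\alpha\beta))$, and squaring then lands in $\M_{4}(\Gamma_{0}(\alpha\beta))$. The difference is in how that pivot is established. The paper itself gives no argument: it defers the proof to \cite{alaca_alaca_ntienjem2016}, where the lemma is assembled from the classical fact that $L(q)-t\,L(q^{t})\in\M_{2}(\Gamma_{0}(t))$, via the decomposition $\alpha L(q^{\alpha})-\beta L(q^{\beta})=\bigl(L(q)-\beta L(q^{\beta})\bigr)-\bigl(L(q)-\alpha L(q^{\alpha})\bigr)$ and the inclusions $\M_{2}(\Gamma_{0}(\alpha)),\,\M_{2}(\Gamma_{0}(\beta))\subseteq\M_{2}(\Gamma_{0}(\alpha\beta))$. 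You instead prove the weight-$2$ automorphy from first principles: the conjugation $\gamma\mapsto\left(\begin{smallmatrix} a & tb \\ c/t & d\end{smallmatrix}\right)$ and the resulting identity $t\,E_{2}(t\gamma z)=t\,(cz+d)^{2}E_{2}(tz)+\tfrac{12}{2\pi i}\,c\,(cz+d)$, whose anomalous term is independent of $t$ and so cancels in the difference --- which is precisely the computation hidden inside the classical fact the reference invokes. Your route is more self-contained; the cited route is shorter because it treats that ingredient as known. One step you should tighten is holomorphy at the cusps: the claim that non-negative powers with polynomially bounded coefficients at $\infty$ force the expansions $\Phi^{[\delta]_{2}}$ at every cusp to have no principal part is true but is itself a lemma (polynomial coefficient growth gives $|\Phi(z)|=O(\operatorname{Im}(z)^{-r})$ as $\operatorname{Im}(z)\to 0$, which bounds each $\Phi^{[\delta]_{2}}$ polynomially as $\operatorname{Im}(z)\to\infty$ and thus excludes negative Fourier coefficients); either supply that argument or cite it. Alternatively, observing as above that $\Phi$ is a difference of the two standard Eisenstein forms $L(q)-t\,L(q^{t})$ disposes of all cusp conditions at once.
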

\begin{theorem} \label{convolutionSum_a_b}
Let $\alpha,\beta\in\mathbb{N}$ be such that
$\alpha$ and $\beta$ are relatively prime and $\alpha < \beta$. 
Then
\begin{align}
 ( \alpha\, L(q^{\alpha}) - \beta\, L(q^{\beta}) )^{2}  =  & 
 (\alpha - \beta)^{2} 
    + \sum_{n=1}^{\infty}\biggl(\ 240\,\alpha^{2}\,\sigma_{3}
    (\frac{n}{\alpha}) 
    + 240\,\beta^{2}\,\sigma_{3}(\frac{n}{\beta})  \notag \\  & 
    + 48\,\alpha\,(\beta-6n)\,\sigma(\frac{n}{\alpha}) 
    + 48\,\beta\,(\alpha-6n)\,\sigma(\frac{n}{\beta}) \notag \\  & 
    - 1152\,\alpha\beta\,W_{(\alpha,\beta)}(n)\,\biggr)q^{n}. 
    \label{evalConvolClass-eqn-11}
\end{align} 
\end{theorem}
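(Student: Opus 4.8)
The plan is to compute the $q$-expansion of the left-hand side directly and match coefficients, since \autoref{evalConvolClass-lema-1} already guarantees the object lives in $\M_{4}(\Gamma_{0}(\alpha\beta))$, so no convergence or modularity issues remain to be checked. First I would expand the square as
\begin{equation*}
\bigl(\alpha L(q^{\alpha}) - \beta L(q^{\beta})\bigr)^{2}
= \alpha^{2}L(q^{\alpha})^{2} - 2\alpha\beta\, L(q^{\alpha})L(q^{\beta}) + \beta^{2}L(q^{\beta})^{2},
\end{equation*}
and then substitute the defining series \autoref{evalConvolClass-eqn-3} for each factor. The constant term is immediate: each $L$ contributes $1$, giving $\alpha^{2} - 2\alpha\beta + \beta^{2} = (\alpha-\beta)^{2}$, which matches. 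The substance of the proof is in organizing the coefficient of $q^{n}$ for $n \geq 1$.

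The key algebraic input I would invoke is the classical identity expressing $L(q)^{2}$ in terms of $L$ and $\sigma_{3}$, namely the Ramanujan-type relation that makes $E_{2}^{2}$ a quasimodular combination; concretely $L(q)^{2} = M(q) + $ a multiple of $q\frac{d}{dq}L(q)$, which after extracting coefficients yields the standard formula $\sum_{i+j=n}\sigma(i)\sigma(j)$-type expressions collapsing into $\sigma_{3}(n)$ and $n\sigma(n)$ terms. Applying this to the diagonal terms $\alpha^{2}L(q^{\alpha})^{2}$ and $\beta^{2}L(q^{\beta})^{2}$, where the variable is $q^{\alpha}$ respectively $q^{\beta}$, produces precisely the contributions $240\alpha^{2}\sigma_{3}(\tfrac{n}{\alpha})$, $240\beta^{2}\sigma_{3}(\tfrac{n}{\beta})$ together with the linear-in-$n$ pieces $48\alpha\beta\,\sigma(\tfrac{n}{\alpha})$, $48\alpha\beta\,\sigma(\tfrac{n}{\beta})$ and the $-288\,n$ weighted divisor terms; here the convention $\sigma_{k}(m)=0$ for $m\notin\mathbb{N}$ handles the cases where $\alpha$ or $\beta$ fails to divide $n$.

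The cross term $-2\alpha\beta\,L(q^{\alpha})L(q^{\beta})$ is where the convolution sum enters and is the main obstacle. Expanding, its coefficient of $q^{n}$ is
\begin{equation*}
-2\alpha\beta\Bigl(-24\sigma(\tfrac{n}{\alpha}) - 24\sigma(\tfrac{n}{\beta}) + 576\!\!\sum_{\alpha l + \beta m = n}\!\!\sigma(l)\sigma(m)\Bigr),
\end{equation*}
where the double sum is exactly $W_{(\alpha,\beta)}(n)$ from \autoref{def-convolution_sum}; this contributes the $-1152\alpha\beta\,W_{(\alpha,\beta)}(n)$ term and additional $48\alpha\beta\,\sigma(\tfrac{n}{\alpha})$, $48\alpha\beta\,\sigma(\tfrac{n}{\beta})$ pieces. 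The care required is in the bookkeeping: one must correctly separate the $l=0$ and $m=0$ boundary contributions (using $\sigma(0)$ conventions implicit in the generating-series product) from the genuine convolution, and then combine the linear divisor-sum terms arising from both the diagonal and cross pieces so that the $\sigma(\tfrac{n}{\alpha})$ coefficient assembles into $48\alpha(\beta - 6n)$ and likewise $48\beta(\alpha-6n)$ for the $\beta$ term. Once every $\sigma_{3}$, $\sigma$, $n\sigma$, and $W$ contribution is collected and the coefficients are verified to agree with \autoref{evalConvolClass-eqn-11}, the identity follows by equating the two $q$-expansions coefficientwise.
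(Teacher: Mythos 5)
Your overall strategy is the right one, and in fact the paper itself offers no proof to compare against: \autoref{convolutionSum_a_b} is quoted from \cite{alaca_alaca_ntienjem2016}, where it is proved by precisely this kind of direct $q$-expansion, using the quasimodular identity for $E_{2}^{2}$ and reading off the product series. Two points in your write-up nevertheless need tightening. First, the ``multiple of $q\frac{d}{dq}L(q)$'' should be pinned down: the identity is $L(q)^{2}=M(q)+12\,q\frac{d}{dq}L(q)$, equivalently $\underset{i+j=n}{\sum}\sigma(i)\sigma(j)=\frac{5}{12}\sigma_{3}(n)+\bigl(\frac{1}{12}-\frac{n}{2}\bigr)\sigma(n)$ (Besge/Glaisher/Ramanujan), from which the coefficient of $q^{n}$ in $L(q^{\alpha})^{2}$ is $240\,\sigma_{3}(\frac{n}{\alpha})-\frac{288\,n}{\alpha}\,\sigma(\frac{n}{\alpha})$.

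Second, and more substantively, your bookkeeping as literally written double-counts: you attribute the pieces $48\alpha\beta\,\sigma(\frac{n}{\alpha})$, $48\alpha\beta\,\sigma(\frac{n}{\beta})$ to the diagonal terms and then again (``additional'') to the cross term. The diagonal term $\alpha^{2}L(q^{\alpha})^{2}$ cannot produce a coefficient involving $\beta$ at all; it contributes only $240\,\alpha^{2}\sigma_{3}(\frac{n}{\alpha})-288\,\alpha\, n\,\sigma(\frac{n}{\alpha})$. The terms $+48\alpha\beta\,\sigma(\frac{n}{\alpha})+48\alpha\beta\,\sigma(\frac{n}{\beta})$ arise exactly once, from the cross term $-2\alpha\beta\,L(q^{\alpha})L(q^{\beta})$ (the products of $1$ with $-24\sum\sigma(\cdot)q^{\cdot}$), alongside $-1152\,\alpha\beta\,W_{(\alpha,\beta)}(n)$. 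If one followed your accounting verbatim, the $\sigma(\frac{n}{\alpha})$ coefficient would assemble to $48\alpha(2\beta-6n)$, not $48\alpha(\beta-6n)$, contradicting \autoref{evalConvolClass-eqn-11}. With the correct single attribution one gets $-288\,\alpha\,n+48\,\alpha\beta=48\,\alpha(\beta-6n)$, and the theorem follows; your remark that the convention $\sigma(0)=0$ identifies the sum over $l,m\geq 1$ with the sum over $\mathbb{N}_{0}^{2}$ in \autoref{def-convolution_sum} is exactly what is needed to recognize the product sum as $W_{(\alpha,\beta)}(n)$. So the proof is correct once this attribution slip is repaired, and it coincides with the argument in the cited source rather than offering a genuinely different route.
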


%%%%%%%%%%%  Evaluation of Convolution Sums for 44 and 52  %%%%%%%%%%%%

\section{Evaluation of the convolution sums $W_{(\alpha,\beta)}(n)$, where 
$\alpha\beta=44,52$}
\label{convolution_44_52}

We give explicit formulae for the convolution sums 
$W_{(1,44)}(n)$, $W_{(4,11)}(n)$, $W_{(1,52}(n)$ and $W_{(4,13}(n)$. 

\subsection{Bases for $\E_{4}(\Gamma_{0}(\alpha\beta))$ and 
  $\EuScript{S}_{4}(\Gamma_{0}(\alpha\beta))$ with $\alpha\beta=44,52$}  
\label{convolution_44_52-gen}

We apply the dimension formulae for the space of Eisenstein forms and 
the space of cusp forms in T.~Miyake's book  
\cite[Thrm 2.5.2,~p.~60]{miyake1989} or W.~A.~Stein's book  
\cite[Prop.\ 6.1, p.\ 91]{wstein} to compute    
$\text{dim}(\E_{4}(\Gamma_{0}(44)))=\text{dim}(\E_{4}(\Gamma_{0}(52)))=6$, 
$\text{dim}(\EuScript{S}_{4}(\Gamma_{0}(44))=15$ and 
$\text{dim}(\EuScript{S}_{4}(\Gamma_{0}(52))=18$.  

Let $D(44)=\{1,2,4,11,22,44\}$ and $D(52)=\{1,2,4,13,26,52\}$ be the sets 
of positive divisors of $44$ and $52$, respectively.

We apply \autoref{ligozat_theorem} $(i)-(v')$ to determine as many elements 
of $\EuScript{S}_{4}(\Gamma_{0}(44))$ and 
$\EuScript{S}_{4}(\Gamma_{0}(52))$ as possible. From these elements 
we then determine the basis elements.
\begin{theorem} \label{basisCusp_44_52}
\begin{enumerate}
\item[\textbf{(a)}] The sets $\EuScript{B}_{E,44}=\{\,M(q^{t})\,\mid ~t\in D(44)\,\}$ 
and $\EuScript{B}_{E,52}=\{\, M(q^{t})\,\mid ~ t\in D(52)\,\}$
are bases of $\E_{4}(\Gamma_{0}(44))$ and 
$\E_{4}(\Gamma_{0}(52))$, respectively. 
\item[\textbf{(b)}] Let $1\leq i\leq 15$ and $1\leq j\leq 18$ 
be positive integers. 

Let $\delta_{1}\in D(44)$ and 
$(r(i,\delta_{1}))_{i,\delta_{1}}$ be the %$10\times 4$ 
\autoref{convolutionSums-4_11-table} of the powers of $\eta(\delta_{1} z)$. 

Let $\delta_{2}\in D(52)$ and 
$(r(j,\delta_{2}))_{j,\delta_{2}}$ be the %$20\times 8$ 
\autoref{convolutionSums-4_13-table} of the powers of $\eta(\delta_{2} z)$. 

Let furthermore  
$A_{i}(q)=\underset{\delta_{1}|44}{\prod}\eta^{r(i,\delta_{1})}(\delta_{1}
z)$ and   
$B_{j}(q)=\underset{\delta_{2}|52}{\prod}\eta^{r(j,\delta_{2})}(\delta_{2}
z)$
be selected elements of 
$\EuScript{S}_{4}(\Gamma_{0}(44))$ and $\EuScript{S}_{4}(\Gamma_{0}(52))$, 
respectively. 

Then the sets 
$\EuScript{B}_{S,44}=\{\,A_{i}(q)\,\mid ~ 1\leq i\leq 15\,\}$ and 
$\EuScript{B}_{S,52}=\{\,B_{j}(q)\,\mid ~ 1\leq j\leq 18\,\}$ 
are bases of $\EuScript{S}_{4}(\Gamma_{0}(44))$ and 
$\EuScript{S}_{4}(\Gamma_{0}(52))$, repectively.
\item[\textbf{(c)}] The sets
$\EuScript{B}_{M,44}=\EuScript{B}_{E,44}\cup\EuScript{B}_{S,44}$ and 
$\EuScript{B}_{M,52}=\EuScript{B}_{E,52}\cup\EuScript{B}_{S,52}$
constitute bases of $\M_{4}(\Gamma_{0}(44))$ and 
$\M_{4}(\Gamma_{0}(52))$, respectively.
\end{enumerate}
\end{theorem}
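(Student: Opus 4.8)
The plan is to prove the three claims about bases by establishing that each proposed set is linearly independent and has the correct cardinality, matching the dimensions already computed at the start of \autoref{convolution_44_52-gen}. Since $\M_{4}(\Gamma_{0}(N)) = \E_{4}(\Gamma_{0}(N)) \oplus \EuScript{S}_{4}(\Gamma_{0}(N))$ and the dimensions of the Eisenstein and cusp subspaces are known, it suffices in each part to exhibit the right number of linearly independent elements lying in the appropriate space.

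For part \textbf{(a)}, I would first confirm that each $M(q^{t})$ with $t \mid N$ lies in $\E_{4}(\Gamma_{0}(N))$: the function $M(q) = E_{4}(q)$ is the weight-$4$ Eisenstein series for the full modular group, hence an Eisenstein form on $\Gamma_{0}(1)$, and the standard fact that $f(q) \in \M_{k}(\Gamma_{0}(N))$ implies $f(q^{t}) \in \M_{k}(\Gamma_{0}(tN))$ (together with the level-raising property for divisors of $N$) places $M(q^{t})$ in $\E_{4}(\Gamma_{0}(N))$ for each $t \in D(N)$. Since $|D(44)| = |D(52)| = 6$ equals $\dim \E_{4}(\Gamma_{0}(44)) = \dim \E_{4}(\Gamma_{0}(52)) = 6$, it then remains only to check linear independence. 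This I would obtain by comparing the $q$-expansions: the leading Fourier coefficients of the $M(q^{t})$ are supported on disjoint or staggered exponents (the constant term and the coefficient pattern dictated by $\sigma_{3}(n/t)$), so a Vandermonde-type or echelon argument on finitely many initial coefficients forces all coefficients in a vanishing linear combination to be zero.

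For part \textbf{(b)}, each $A_{i}(q)$ and $B_{j}(q)$ is by construction an eta quotient over the divisors of $N$; I would verify that the exponent vectors $(r(i,\delta_{1}))$ and $(r(j,\delta_{2}))$ recorded in \autoref{convolutionSums-4_11-table} and \autoref{convolutionSums-4_13-table} satisfy conditions \textbf{(i)}--\textbf{(iv)} and the strict inequality \textbf{(v')} of \autoref{ligozat_theorem}, which guarantees membership in $\EuScript{S}_{4}(\Gamma_{0}(N))$. Having $15$ elements in $\EuScript{S}_{4}(\Gamma_{0}(44))$ and $18$ in $\EuScript{S}_{4}(\Gamma_{0}(52))$ matches the computed dimensions $15$ and $18$ exactly, so again only linear independence is needed. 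For this I would compute enough terms of each eta-quotient $q$-expansion (using $\eta(\delta z) = q^{\delta/24}\prod_{n\geq 1}(1-q^{\delta n})$) and show that the resulting coefficient matrix, truncated to a square block of size $15$ (resp.\ $18$), has nonzero determinant. I expect this verification of linear independence to be the main obstacle: it is the one step that genuinely requires the explicit numerical data from the tables, and the eta quotients can share low-order behavior, so one must expand to sufficiently high order to separate them — this is exactly where the symbolic-computation software mentioned in the introduction does the heavy lifting.

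Finally, part \textbf{(c)} follows formally from the decomposition $\M_{4}(\Gamma_{0}(N)) = \E_{4}(\Gamma_{0}(N)) \oplus \EuScript{S}_{4}(\Gamma_{0}(N))$: the union of a basis of the Eisenstein subspace and a basis of the cusp subspace of a direct sum is a basis of the whole space, and the disjointness of $\EuScript{B}_{E,N}$ and $\EuScript{B}_{S,N}$ is automatic since Eisenstein forms and nonzero cusp forms meet only in $0$. Thus $\EuScript{B}_{M,N} = \EuScript{B}_{E,N} \cup \EuScript{B}_{S,N}$ has $6 + 15 = 21$ and $6 + 18 = 24$ elements respectively and spans $\M_{4}(\Gamma_{0}(44))$ and $\M_{4}(\Gamma_{0}(52))$, completing the proof.
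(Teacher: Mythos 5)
Your proposal follows essentially the same route as the paper's own proof: membership of the $M(q^{t})$ via Stein's results on Eisenstein series, linear independence in part (a) from the triangular (echelon) structure of the coefficient matrix indexed by divisors, membership of the eta quotients via \autoref{ligozat_theorem} conditions (i)--(v'), linear independence in part (b) by a software-verified nonzero determinant of the truncated coefficient matrix, and part (c) from the decomposition $\M_{4}(\Gamma_{0}(N))=\E_{4}(\Gamma_{0}(N))\oplus\EuScript{S}_{4}(\Gamma_{0}(N))$. The argument is correct and matches the paper in both structure and the key computational steps.
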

For $1\leq i\leq 15$ and $1\leq j\leq 18$ the eta quotients $A_{i}(q)$ 
and $B_{j}(q)$ can be expressed in the form 
$\underset{n=1}{\overset{\infty}{\sum}}a_{i}(n)q^{n}$ 
and $\underset{n=1}{\overset{\infty}{\sum}}b_{j}(n)q^{n}$, respectively.

\begin{proof} We only prove the case $\alpha\beta=44$. The case 
$\alpha\beta=52$ is proved similarly. 
\begin{enumerate}
\item[(a)] When we apply Theorem 5.8 in Section 5.3 of \waS\ \cite[p.~86]{wstein}, 
it follows that $M(q^{t})$ belongs to $\M_{4}(\Gamma_{0}(t))$ for each $t\in D(44)$. 
Since $\E_{4}(\Gamma_{0}(44))$ has a finite dimension, it is sufficient 
to show that the set of $M(q^{t})$ such that $t\in D(44)$ is linearly independent. 
Suppose that $x_{t}\in\mathbb{C}$ with $t|44$. Then  
\begin{equation*}
\underset{t|44}{\sum}x_{t}\,M(q^{t})=\underset{t|44}{\sum}x_{t}+
240\,\underset{n\geq 1}{\sum}\biggl(\underset{t|44}{\sum}x_{t}\sigma_{3}(\frac{n}{t})
\biggr)q^{n}=0. 
\end{equation*}
We compare the coefficients of $q^{n}$ for $n\in D(44)$ 
to obtain the following homogeneous system of $6$ equations 
in $6$ unknowns:
\begin{equation*}
\underset{u|44}{\sum}\sigma_{3}(\frac{t}{u})x_{u}=0,\qquad t\in D(44).
\end{equation*}
The matrix of this homogeneous system of equations is triangular with 
positive integer values, $1$, on the diagonal. 
Hence, the solution is $x_{t}=0$ for all $t\in D(44)$. 
Therefore, the set 
$\EuScript{B}_{E}$ is linearly independent and hence  
is a basis of $\E_{4}(\Gamma_{0}(44))$.

\item[(b)] As mentioned above, the $A_{i}(q)$ with $1\leq i\leq 15$ are obtained 
from an exhaustive search using \autoref{ligozat_theorem} $(i)-(v')$. 
Hence, each 
$A_{i}(q)$ is in the space $\EuScript{S}_{4}(\Gamma_{0}(44))$.

Since the dimension of $\EuScript{S}_{4}(\Gamma_{0}(44))$ is $15$, it is 
sufficient to show that the set $\{\,A_{i}(q)\mid 1\leq i\leq 15\}$ 
is linearly independent. 
 For that suppose that $x_{i}\in\mathbb{C}$ and 
$\underset{i=1}{\overset{15}{\sum}}x_{i}\,A_{i}(q)=0$. Then   
\begin{equation*}
\underset{i=1}{\overset{15}{\sum}}x_{i}\,A_{i}(q)
= \underset{n=1}{\overset{\infty}{\sum}}(\,\underset{i=1}{\overset{15}{\sum}}x_{i}\,a_{i}(n)\,)q^{n} = 0
\end{equation*}
which gives the following homogeneous system of equations in $15$ unknowns  
\begin{equation}  \label{basis-cusp-eqn-sol}
\underset{i=1}{\overset{15}{\sum}}\,a_{i}(n)\,x_{i}= 0,\qquad 
1\leq n\leq 15.
\end{equation}
A computation using a software package for (symbolic) scientific 
computation shows that 
the determinant of the matrix of this homogeneous system of equations is 
non-zero. So, $x_{i}=0$ for all $1\leq i\leq 15$. Hence, the set 
$\{\, A_{i}(q)\mid 1\leq i\leq 15\,\}$ is linearly 
independent and therefore a basis of $\EuScript{S}_{4}(\Gamma_{0}(44))$.

\item[(c)] Since $\M_{4}(\Gamma_{0}(44))=\E_{4}(\Gamma_{0}(44))\oplus 
\EuScript{S}_{4}(\Gamma_{0}(44))$, the result follows from (a) and (b).
\end{enumerate}
\end{proof}
We observe that the basis elements 
\begin{enumerate}
	\item[(o1)] $A_{i}(q)$, $1\leq i\leq 5$, come from $\EuScript{S}_{4}(\Gamma_{0}(22))$  
	which is the space of cusp forms necessary for the evaluation of the convolution 
	sums $W_{22}(n)$ and $W_{(2,11)}(n)$ given by \aA\ et al.\ \cite{alaca_alaca_ntienjem2016}. The element $A_{2}(q)$ is inherited from 
	$\EuScript{S}_{4}(\Gamma_{0}(11))$ which is part of $\M_{4}(\Gamma_{0}(11))$; 
	the convolution sum $W_{11}(n)$ is evatuated by \eR\ \cite{royer}.
	\item[(o2)] $A_{2i}(q)=A_{i}(q^{2})$, for $i = 2,3,4,5$. Therefore, 
		$a_{2i}(n)=a_{i}(\frac{n}{2})$, for $i = 2,3,4,5$.
	\item[(o3)] $B_{j}(q)$, $1\leq j\leq 7$, $B_{15}(q)$ and $B_{17}(q)$ are imported from 
	$\EuScript{S}_{4}(\Gamma_{0}(26))$ which is the space of cusp forms required for  
	the evaluation of the convolution sums 
	 $W_{26}(n)$ and $W_{(2,13)}(n)$ given by  \aA\ et al.\ \cite{alaca_alaca_ntienjem2016}.
	\item[(o4)] $B_{2j}(q)=B_{j}(q^{2})$, for $4\leq j\leq 7$, $B_{16}(q)=B_{15}(q^{2})$ and 
	$B_{18}(q)=B_{17}(q^{2})$. Consequently, 
		$b_{2j}(n)=b_{j}(\frac{n}{2})$, for $4\leq j\leq 7$, 
		$b_{16}(n)=b_{15}(\frac{n}{2})$ and $b_{18}(n)=b_{17}(\frac{n}{2})$.
\end{enumerate}
The above observation is based on the fact that 
\begin{align*}
\M_{4}(\Gamma_{0}(11)) \subset \M_{4}(\Gamma_{0}(22)) \subset \M_{4}(\Gamma_{0}(44)) 
\quad \text{and} \\
\M_{4}(\Gamma_{0}(13)) \subset \M_{4}(\Gamma_{0}(26)) \subset \M_{4}(\Gamma_{0}(52)).
\end{align*}
As mentioned in (o1) above, the eta quotient $A_{2}(q)$ is a basis element of 
$\EuScript{S}_{4}(\Gamma_{0}(11))$. Hence, basis elements of 
$\EuScript{S}_{4}(\Gamma_{0}(11))$ can be determined using \autoref{ligozat_theorem}. 
There is no basis element of $\EuScript{S}_{4}(\Gamma_{0}(13))$ in the space of 
cusp forms $\EuScript{S}_{4}(\Gamma_{0}(26))$; this is an indication that there is no 
basis element of $\EuScript{S}_{4}(\Gamma_{0}(13))$ that can be determined using 
\autoref{ligozat_theorem}.

\subsection{Evaluation of $W_{(\alpha,\beta)}(n)$ when $\alpha\beta=44,52$} 
\label{convolSum-w_44_52}

\begin{lemma} \label{lema-w_44_52}
We have  
\begin{multline}
( L(q) - 44\, L(q^{44}))^{2}  
 = 1849 + \sum_{n=1}^{\infty}\biggl(\, 
   \frac{124464}{61}\,\sigma_{3}(n)
- \frac{577662336}{40565}\,\sigma_{3}(\frac{n}{2})  \\  
+  \frac{68986368}{5795}\,\sigma_{3}(\frac{n}{4})   
  - \frac{174240}{61}\,\sigma_{3}(\frac{n}{11}) 
+  \frac{62064288}{5795}\,\sigma_{3}(\frac{n}{22}) 
+ \frac{2525690112}{5795}\,\sigma_{3}(\frac{n}{44}) \\
 +   \frac{1440}{61}\,a_{1}(n)
 - \frac{82927872}{5795}\,a_{2}(n)    
- \frac{887345568}{5795}\,a_{3}(n)
- \frac{1676429568}{5795}\,a_{4}(n)   \\
- \frac{2804007168}{5795}\,a_{5}(n)   
+ \frac{3753380736}{5795}\,a_{6}(n)
 - \frac{13356288}{19}\,a_{7}(n)    
+ \frac{4226609664}{5795}\,a_{8}(n)    \\ 
  - \frac{633600}{19}\,a_{9}(n)  
- \frac{527332608}{1159}\,a_{10}(n) 
+   \frac{7679232}{19}\,a_{11}(n)     
 - \frac{15231744}{95}\,a_{12}(n)   \\
- \frac{131079168}{95}\,a_{13}(n) 
 +  \frac{317952}{19}\,a_{14}(n)     
 - \frac{12595968}{95}\,a_{15}(n)
 \biggr)q^{n}, \label{convolSum-eqn-1_44}  
\end{multline}
\begin{multline}
( 4\,L(q^{4}) - 11\, L(q^{11}))^{2}  
 = 49 + \sum_{n=1}^{\infty}\biggl(\, 
 - \frac{110880}{61}\,\sigma_{3}(n)
 + \frac{80121888}{5795}\,\sigma_{3}(\frac{n}{2}) \\  
 - \frac{48338688}{5795}\,\sigma_{3}(\frac{n}{4})   
 +  \frac{1817904}{61}\,\sigma_{3}(\frac{n}{11}) 
 - \frac{98480448}{5795}\,\sigma_{3}(\frac{n}{22})  
 - \frac{27320832}{5795}\,\sigma_{3}(\frac{n}{44}) \\
+   \frac{110880}{61}\,a_{1}(n)
+  \frac{174857472}{5795}\,a_{2}(n)   
+ \frac{1169427168}{5795}\,a_{3}(n)
+ \frac{2114189568}{5795}\,a_{4}(n)  \\
+ \frac{3025513728}{5795}\,a_{5}(n)  
- \frac{3511080576}{5795}\,a_{6}(n)
+  \frac{13318272}{19}\,a_{7}(n)   
- \frac{3641762304}{5795}\,a_{8}(n)   \\ 
+   \frac{633600}{19}\,a_{9}(n)
+  \frac{663913728}{1159}\,a_{10}(n)
 - \frac{7679232}{19}\,a_{11}(n)    
+  \frac{15231744}{95}\,a_{12}(n)    \\
+  \frac{131079168}{95}\,a_{13}(n)
  - \frac{317952}{19}\,a_{14}(n)     
+  \frac{12595968}{95}\,a_{15}(n)
\,\biggr)q^{n},
\label{convolSum-eqn-4_11} 
\end{multline}
\begin{multline}
( L(q) - 52\, L(q^{52}))^{2}   
 = 2601 + \sum_{n=1}^{\infty}\biggl(\, 
    \frac{6109008}{1243}\,\sigma_{3}(n)
 - \frac{456504084816}{6064597}\,\sigma_{3}(\frac{n}{2})  \\     
 +    \frac{254592}{41}\,\sigma_{3}(\frac{n}{4})   
   - \frac{7361952}{1243}\,\sigma_{3}(\frac{n}{13}) 
- \frac{4829528827344}{6064597}\,\sigma_{3}(\frac{n}{26})   \\
 +   \frac{434738304}{41}\,\sigma_{3}(\frac{n}{52})   
   - \frac{3066144}{1243}\,b_{1}(n)   
+  \frac{498157179048}{6064597}\,b_{2}(n)  \\     
+  \frac{927327070704}{6064597}\,b_{3}(n)  
 - \frac{442577500560}{6064597}\,b_{4}(n)   
- \frac{8530413669648}{6064597}\,b_{5}(n)   \\
- \frac{10161699732288}{6064597}\,b_{6}(n)      
 - \frac{10388366352}{1243}\,b_{7}(n)   
  +   \frac{1040832}{41}\,b_{8}(n)   \\ 
 +    7488\,b_{9}(n)   
+  \frac{329100929664}{147917}\,b_{10}(n)   
 +    27456\,b_{11}(n)      \\
- \frac{15249288510144}{6064597}\,b_{12}(n)   
 +    17472\,b_{13}(n)    
 +   \frac{47009664}{41}\,b_{14}(n)    \\
 - \frac{25166713896}{551327}\,b_{15}(n)    
+  \frac{4167031826880}{6064597}\,b_{16}(n)     
 - \frac{126425023920}{6064597}\,b_{17}(n)  \\  
  +   \frac{868608}{41}\,b_{18}(n)
\, \biggr)q^{n}, \label{convolSum-eqn-1_52}  
\end{multline}
\begin{multline}
(4\, L(q^{4}) - 13\, L(q^{13}))^{2} 
 = 81 + \sum_{n=1}^{\infty}\biggl(\, 
    \frac{3066144}{1243}\,\sigma_{3}(n)
- \frac{240061230672}{6064597}\,\sigma_{3}(\frac{n}{2})   \\  
  +   \frac{139392}{41}\,\sigma_{3}(\frac{n}{4}) 
  +  \frac{45798672}{1243}\,\sigma_{3}(\frac{n}{13}) 
 - \frac{53922031824}{6064597}\,\sigma_{3}(\frac{n}{26})  \\
 +   \frac{20290176}{41}\,\sigma_{3}(\frac{n}{52})  
   - \frac{3066144}{1243}\,b_{1}(n)  
+  \frac{212735819880}{6064597}\,b_{2}(n)   \\ 
+  \frac{251848851024}{6064597}\,b_{3}(n) 
- \frac{400561037808}{6064597}\,b_{4}(n)  
- \frac{5152459820400}{6064597}\,b_{5}(n)   \\ 
- \frac{5408748312192}{6064597}\,b_{6}(n)    
 - \frac{5489355312}{1243}\,b_{7}(n)  
 +    \frac{150336}{41}\,b_{8}(n)   \\
    - 7488\,b_{9}(n)   
+  \frac{151016538432}{147917}\,b_{10}(n) 
    - 27456\,b_{11}(n)    \\ 
- \frac{8224832431680}{6064597}\,b_{12}(n)  
    - 17472\,b_{13}(n)   
   - \frac{544896}{41}\,b_{14}(n)   \\
 - \frac{11115614088}{551327}\,b_{15}(n)
+ \frac{2056953609600}{6064597}\,b_{16}(n)    
 - \frac{64745693328}{6064597}\,b_{17}(n)    \\
    - \frac{2304}{41}\,b_{18}(n)
\,\biggr)q^{n}.
\label{convolSum-eqn-4_13}
\end{multline}
\end{lemma}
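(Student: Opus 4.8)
The plan is to exploit the fact that each squared difference of Eisenstein series already lives in a finite dimensional space of modular forms whose basis is known, and then to recover the explicit expansion by matching a finite number of Fourier coefficients. Fix $(\alpha,\beta)$ to be one of $(1,44),(4,11),(1,52),(4,13)$ and put $N=\alpha\beta\in\{44,52\}$. By \autoref{evalConvolClass-lema-1} the form $(\alpha\,L(q^{\alpha})-\beta\,L(q^{\beta}))^{2}$ belongs to $\M_{4}(\Gamma_{0}(N))$, and by \autoref{basisCusp_44_52}(c) this space is spanned by the Eisenstein forms $M(q^{t})$ with $t\mid N$ together with the cusp forms $A_{i}(q)$, $1\le i\le 15$ (for $N=44$), respectively $B_{j}(q)$, $1\le j\le 18$ (for $N=52$). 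Consequently there exist unique rational constants $X_{t}$ and $Y_{i}$ with
\[
(\alpha\,L(q^{\alpha})-\beta\,L(q^{\beta}))^{2}
=\sum_{t\mid N}X_{t}\,M(q^{t})+\sum_{i}Y_{i}\,A_{i}(q),
\]
and the analogous identity with $B_{j}(q)$ when $N=52$. Writing $D=\dim\M_{4}(\Gamma_{0}(N))$, so that $D=21$ for $N=44$ and $D=24$ for $N=52$, the task is reduced to computing these $D$ constants.

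Next I would expand both sides as power series in $q$. The left-hand side is given directly by \autoref{convolutionSum_a_b}, so its coefficient of $q^{n}$ is an explicit expression in $\sigma_{3}$ and $\sigma$; in particular its constant term equals $(\alpha-\beta)^{2}$, matching the displayed values $1849$, $49$, $2601$, $81$. On the right-hand side I would use $M(q^{t})=1+240\sum_{n\ge1}\sigma_{3}(\tfrac{n}{t})q^{n}$ and expand each eta quotient $A_{i}(q)=\prod_{\delta\mid N}\eta^{r(i,\delta)}(\delta z)=q^{\frac{1}{24}\sum_{\delta}\delta\,r(i,\delta)}\prod_{\delta}F(q^{\delta})^{r(i,\delta)}$ up to order $D-1$. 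Comparing the coefficients of $q^{n}$ for $0\le n\le D-1$ produces a $D\times D$ inhomogeneous linear system in the unknowns $X_{t},Y_{i}$. By a determinant computation of the same kind as in the proof of \autoref{basisCusp_44_52}, the coefficient matrix of this system has non-vanishing determinant, so the system has a unique solution. Substituting $M(q^{t})=1+240\sum\sigma_{3}(\tfrac{n}{t})q^{n}$ back in, and regrouping the constant term, the $\sigma_{3}$-terms and the cusp-form terms, then yields the four stated identities.

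The obstacle is computational rather than conceptual. One must generate sufficiently many exact Fourier coefficients of the eta quotients and solve a linear system of size $21$, respectively $24$. The large denominators appearing in the answer, such as $6064597$ and $5795$, make it clear that the entire calculation has to be carried out in exact rational arithmetic; any floating-point approach would destroy the result. This is precisely why the symbolic computation packages listed in \autoref{introduction} are employed. Once the unique solution is obtained and substituted, the four identities hold coefficientwise for every $n$, since both sides are then the same element of $\M_{4}(\Gamma_{0}(N))$ expressed in the basis of \autoref{basisCusp_44_52}.
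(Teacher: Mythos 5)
Your proposal is correct and takes essentially the same route as the paper: membership of $(\alpha L(q^{\alpha})-\beta L(q^{\beta}))^{2}$ in $\M_{4}(\Gamma_{0}(\alpha\beta))$ via \autoref{evalConvolClass-lema-1}, expansion in the basis of \autoref{basisCusp_44_52}(c), and determination of the $21$ (resp.\ $24$) unknown coefficients by matching finitely many Fourier coefficients and solving the resulting linear system in exact rational arithmetic (the paper uses $n\in\{1,\dots,18,20,22,44\}$ rather than $0\le n\le D-1$, an inessential difference). The only imprecision is your claim that \autoref{convolutionSum_a_b} gives the left-hand coefficients ``in $\sigma_{3}$ and $\sigma$'': they also involve $W_{(\alpha,\beta)}(n)$, which for the finitely many $n$ used must be computed directly from its defining finite sum — harmless, and exactly what the paper's computation does.
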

\begin{proof} We just prove the case $( 4\,L(q^{4}) - 11\,L(q^{11}))^{2}$. 
The other cases are proved similarly.

From 
\hyperref[evalConvolClass-lema-1]{Lemma \ref*{evalConvolClass-lema-1}} it follows 
that $( 4\, L(q^{4}) - 11\, L(q^{11}) )^{2}\in
\M_{4}(\Gamma_{0}(44))$. 
Hence, by \autoref{basisCusp_44_52}\,(c), there exist 
$X_{\delta},Y_{j}\in\mathbb{C}, 
1\leq j\leq 15\text{ and } \delta\in D(44)$, such that  
\begin{align}
( 4\, L(q^{4}) - 11\, L(q^{11}) )^{2}   & =  
\sum_{\delta|44}X_{\delta}\,M(q^{\delta}) + \sum_{j=1}^{15}\,Y_{j}\,
                                        A_{j}(q)  \label{convolution_44_52-eqn-0a}
\\
 ~  & = \sum_{\delta|44}X_{\delta} + \sum_{n=1}^{\infty}\biggl(\, 
 240\,\sum_{\delta|44}\,\sigma_{3}(\frac{n}{\delta})\,X_{\delta} + 
 \sum_{j=1}^{m_{S}}\,a_{j}(n)\,Y_{j}\, \biggr)q^{n}. \notag
\end{align} 
We compare the right hand side of 
\autoref{convolution_44_52-eqn-0a} with that of   
\autoref{evalConvolClass-eqn-11} when we have set $(\alpha,\beta)=(4,11)$ to 
obtain 
\begin{align*}
\sum_{n=1}^{\infty}\biggl(\, 
 240\,\sum_{\delta|44}\,\sigma_{3}(\frac{n}{\delta})\,X_{\delta} + 
 \sum_{j=1}^{15}\,a_{j}(n)\,Y_{j}\, \biggr)q^{n} = \sum_{n=1}^{\infty}\biggl(\
 3840\,\sigma_{3}
    (\frac{n}{4}) %\\
    + 29040\,\sigma_{3}(\frac{n}{11}) \\
    + 192\,(11 - 6\,n)\,\sigma(\frac{n}{4})  %\\
    + 528\,(4 - 6\,n)\,\sigma(\frac{n}{11})   
    - 50688\,W_{(4,11)}(n)\,\biggr)q^{n}. 
\end{align*}
We then take the coefficients of $q^{n}$ for which $n$ is in 
$$\{\,1,2,3,4,5,6,7,8,9,10,11,12,13,14,15,16,17,18,20,22,44\, \}.$$
to obtain  
a system of linear equations whose resolution using a software package for 
symbolic scientific computation yields the unique solution which determines 
the values of the unknowns $X_{\delta}$ for all $\delta\in D(44)$ and the 
values of the unkowns $Y_{j}$ for all $1\leq j\leq 15$. 
Therefore, we get the stated result.
\end{proof}
Our main result of this section will now be stated and proved. 
\begin{theorem} \label{convolSum-theor-w_44_52}
Let $n$ be a positive integer. Then 
\begin{align}
W_{(1,44)}(n)   = & 
   - \frac{13}{366}\,\sigma_{3}(n)
+  \frac{501443}{1784860}\,\sigma_{3}(\frac{n}{2}) 
  - \frac{1361}{5795}\,\sigma_{3}(\frac{n}{4}) 
  +  \frac{55}{976}\,\sigma_{3}(\frac{n}{11})   \notag \\ & 
  - \frac{19591}{92720}\,\sigma_{3}(\frac{n}{22}) 
 +  \frac{9878}{17385}\,\sigma_{3}(\frac{n}{44}) 
 + (\frac{1}{24}-\frac{1}{176}n)\sigma(n)  \notag \\ & 
 + (\frac{1}{24}-\frac{1}{4}n)\sigma(\frac{n}{44})  
   - \frac{5}{10736}\,a_{1}(n)
 +  \frac{35993}{127490}\,a_{2}(n)  \notag \\ & 
+  \frac{3081061}{1019920}\,a_{3}(n)  
+   \frac{66147}{11590}\,a_{4}(n)
 + \frac{1217017}{127490}\,a_{5}(n)   
 - \frac{3258143}{254980}\,a_{6}(n) \notag \\ & 
  +  \frac{527}{38}\,a_{7}(n)   
 - \frac{917233}{63745}\,a_{8}(n)
  +   \frac{25}{38}\,a_{9}(n)
 +  \frac{20807}{2318}\,a_{10}(n)  \notag \\ &  
   - \frac{303}{38}\,a_{11}(n)   
  +  \frac{601}{190}\,a_{12}(n)
  +  \frac{2586}{95}\,a_{13}(n) 
   - \frac{69}{209}\,a_{14}(n)  %\notag \\ & 
  +  \frac{497}{190}\,a_{15}(n), 
\label{convolSum-theor-w_1_44}  
\end{align} 
\begin{align}
W_{(4,11)}(n)  = & 
    \frac{35}{976}\,\sigma_{3}(n)
  - \frac{25291}{92720}\,\sigma_{3}(\frac{n}{2})  
 +  \frac{4178}{17385}\,\sigma_{3}(\frac{n}{4}) 
   - \frac{11}{732}\,\sigma_{3}(\frac{n}{11})   \notag \\ & 
  + \frac{15543}{46360}\,\sigma_{3}(\frac{n}{22}) 
  +  \frac{539}{5795}\,\sigma_{3}(\frac{n}{44}) 
 + (\frac{1}{24}-\frac{1}{44}n)\sigma(\frac{n}{4})  \notag \\ & 
 + (\frac{1}{24}-\frac{1}{16}n)\sigma(\frac{n}{11}) 
   - \frac{35}{976}\,a_{1}(n)
  - \frac{75893}{127490}\,a_{2}(n)   \notag \\ & 
 - \frac{4060511}{1019920}\,a_{3}(n)
 - \frac{917617}{127490}\,a_{4}(n)
 - \frac{1313157}{127490}\,a_{5}(n)  
 + \frac{3047813}{254980}\,a_{6}(n)   \notag \\ & 
  - \frac{1051}{76}\,a_{7}(n)
 +  \frac{790313}{63745}\,a_{8}(n)
   - \frac{25}{38}\,a_{9}(n)
 - \frac{288157}{25498}\,a_{10}(n)   \notag \\ &  
  +  \frac{303}{38}\,a_{11}(n)
   - \frac{601}{190}\,a_{12}(n)
  - \frac{2586}{95}\,a_{13}(n)
  +  \frac{69}{209}\,a_{14}(n)  %\notag \\ & 
   - \frac{497}{190}\,a_{15}(n),  
\label{convolSum-theor-w_4_11}  
 \end{align}
\begin{align}
W_{(1,52)}(n)  = & 
     - \frac{97}{1243}\,\sigma_{3}(n)
  + \frac{731577059}{582201312}\,\sigma_{3}(\frac{n}{2})    
     - \frac{17}{164}\,\sigma_{3}(\frac{n}{4}) 
   +  \frac{5899}{59664}\,\sigma_{3}(\frac{n}{13})     \notag \\ & 
  + \frac{7739629531}{582201312}\,\sigma_{3}(\frac{n}{26})   
    - \frac{81757}{492}\,\sigma_{3}(\frac{n}{52})
 + (\frac{1}{24}-\frac{1}{208}n)\sigma(n)  \notag \\ & 
 + (\frac{1}{24}-\frac{1}{4}n)\sigma(\frac{n}{52})   
   +  \frac{31939}{775632}\,b_{1}(n)
 - \frac{6918849709}{5045744704}\,b_{2}(n)  \notag \\ & 
 - \frac{19319313973}{7568617056}\,b_{3}(n)
  + \frac{236419605}{194067104}\,b_{4}(n)     
  + \frac{4556844909}{194067104}\,b_{5}(n)   \notag \\ & 
  + \frac{1357064601}{48516776}\,b_{6}(n)
  +  \frac{5549341}{39776}\,b_{7}(n)
     - \frac{139}{328}\,b_{8}(n)    \notag \\ &  
     - \frac{1}{8}\,b_{9}(n)
  - \frac{65925667}{1775004}\,b_{10}(n)   
     - \frac{11}{24}\,b_{11}(n)    
  + \frac{2036496863}{48516776}\,b_{12}(n)   \notag \\ & 
      - \frac{7}{24}\,b_{13}(n)
    - \frac{3139}{164}\,b_{14}(n)    
  + \frac{349537693}{458704064}\,b_{15}(n)  \notag \\ & 
  - \frac{556494635}{48516776}\,b_{16}(n)  
  + \frac{67534735}{194067104}\,b_{17}(n)    
     - \frac{29}{82}\,b_{18}(n),
\label{convolSum-theor-w_1_52}  
\end{align} 
\begin{align}
W_{(4,13)}(n)  = & 
    - \frac{31939}{775632}\,\sigma_{3}(n)
  + \frac{5001275639}{7568617056}\,\sigma_{3}(\frac{n}{2}) 
     +  \frac{47}{6396}\,\sigma_{3}(\frac{n}{4}) 
   +  \frac{24049}{387816}\,\sigma_{3}(\frac{n}{13}) \notag \\ & 
  + \frac{1123375663}{7568617056}\,\sigma_{3}(\frac{n}{26}) 
    - \frac{17613}{2132}\,\sigma_{3}(\frac{n}{52}) 
 + (\frac{1}{24}-\frac{1}{52}n)\sigma(\frac{n}{4}) \notag \\ & 
 + (\frac{1}{24}-\frac{1}{16}n)\sigma(\frac{n}{13})  
   +  \frac{31939}{775632}\,b_{1}(n)
  - \frac{2954664165}{5045744704}\,b_{2}(n)  \notag \\ & 
  - \frac{5246851063}{7568617056}\,b_{3}(n)
  + \frac{8345021621}{7568617056}\,b_{4}(n)   
  + \frac{35780970975}{2522872352}\,b_{5}(n)  \notag \\ & 
  + \frac{4695094021}{315359044}\,b_{6}(n)
  +  \frac{38120523}{517088}\,b_{7}(n)
     - \frac{261}{4264}\,b_{8}(n)    \notag \\ & 
    +   \frac{1}{8}\,b_{9}(n)
  - \frac{786544471}{46150104}\,b_{10}(n)   
   +    \frac{11}{24}\,b_{11}(n)   
  + \frac{42837668915}{1892154264}\,b_{12}(n)   \notag \\ & 
    +  \frac{7}{24}\,b_{13}(n)
   +   \frac{473}{2132}\,b_{14}(n) 
  +  \frac{154383529}{458704064}\,b_{15}(n)   \notag \\ & 
  - \frac{5356650025}{946077132}\,b_{16}(n)   
  + \frac{1348868611}{7568617056}\,b_{17}(n)  
  +  \frac{1}{1066}\,b_{18}(n).
\label{convolSum-theor-w_4_13}  
 \end{align}
\end{theorem}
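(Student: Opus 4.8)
The plan is to read off $W_{(\alpha,\beta)}(n)$ by comparing the two expansions now available for the single modular form $(\alpha\,L(q^{\alpha})-\beta\,L(q^{\beta}))^{2}$. On one hand, \autoref{convolutionSum_a_b} gives its $q^{n}$-coefficient as an explicit expression in $\sigma_{3}(\tfrac{n}{\alpha})$, $\sigma_{3}(\tfrac{n}{\beta})$, $\sigma(\tfrac{n}{\alpha})$, $\sigma(\tfrac{n}{\beta})$ and the unknown $W_{(\alpha,\beta)}(n)$, the last entering only through the single term $-1152\,\alpha\beta\,W_{(\alpha,\beta)}(n)$. On the other hand, \autoref{lema-w_44_52} gives the $q^{n}$-coefficient of the \emph{same} form as a linear combination of the Eisenstein coefficients $\sigma_{3}(\tfrac{n}{t})$, $t\in D(\alpha\beta)$, together with the cusp-form coefficients $a_{i}(n)$ (resp.\ $b_{j}(n)$). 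Since both are power-series expansions of one and the same function, their $q^{n}$-coefficients agree for every $n\geq 1$, and equating them yields a single linear identity in which $W_{(\alpha,\beta)}(n)$ is the only unknown.

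Concretely, I would proceed one pair at a time, for $(\alpha,\beta)\in\{(1,44),(4,11),(1,52),(4,13)\}$. For each pair I substitute the given $\alpha,\beta$ into the $q^{n}$-coefficient of \autoref{evalConvolClass-eqn-11}, set it equal to the corresponding displayed coefficient from \autoref{convolSum-eqn-1_44}, \autoref{convolSum-eqn-4_11}, \autoref{convolSum-eqn-1_52} or \autoref{convolSum-eqn-4_13}, and solve for $W_{(\alpha,\beta)}(n)$ by isolating the $-1152\,\alpha\beta\,W_{(\alpha,\beta)}(n)$ term. In the resulting formula the cusp coefficients and the divisor sums $\sigma_{3}(\tfrac{n}{t})$ with $t\notin\{\alpha,\beta\}$, which occur only on the Lemma side, are simply multiplied by $-\tfrac{1}{1152\,\alpha\beta}$; for $t\in\{\alpha,\beta\}$ one first subtracts $240\,\alpha^{2}$ (resp.\ $240\,\beta^{2}$) from the Lemma coefficient before applying the same factor; and the two terms $48\,\alpha(\beta-6n)\,\sigma(\tfrac{n}{\alpha})$ and $48\,\beta(\alpha-6n)\,\sigma(\tfrac{n}{\beta})$, present only on the Theorem side, become after division by $1152\,\alpha\beta$ exactly the linear-in-$n$ expressions $(\tfrac{1}{24}-c\,n)\,\sigma(\tfrac{n}{\alpha})$ and $(\tfrac{1}{24}-c'\,n)\,\sigma(\tfrac{n}{\beta})$ displayed in the theorem.

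This is in essence a bookkeeping computation, so the main obstacle is not conceptual but arithmetic: reducing the large rational coefficients of \autoref{lema-w_44_52} to the simplified fractions stated in the theorem, which is precisely the task delegated to the symbolic-computation packages named in the introduction. As a sanity check I would confirm that the $n$-independent constant terms agree, i.e.\ that $(\alpha-\beta)^{2}$ matches the constant of the relevant Lemma identity (for instance $43^{2}=1849$ for $(1,44)$), and I would verify a few low-order coefficients by hand; e.g.\ for $(1,44)$ the $\sigma_{3}(n)$ coefficient $\tfrac{124464}{61}$, after subtracting $240$ and dividing by $-1152\cdot 44=-50688$, indeed collapses to $-\tfrac{13}{366}$, and $48(44-6n)/50688$ simplifies to $\tfrac{1}{24}-\tfrac{1}{176}n$. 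Since each identity is established by exact comparison of power-series coefficients, and the dimension count in \autoref{basisCusp_44_52} guarantees that the basis representation used in \autoref{lema-w_44_52} is unique, no justification beyond this coefficient matching is required.
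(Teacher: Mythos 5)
Your proposal is correct and takes essentially the same route as the paper: the paper likewise equates, for each pair $(\alpha,\beta)$, the $q^{n}$-coefficients of \autoref{evalConvolClass-eqn-11} with those of the corresponding expansion in \autoref{lema-w_44_52} and then solves the resulting linear relation for $W_{(\alpha,\beta)}(n)$, which appears only in the term $-1152\,\alpha\beta\,W_{(\alpha,\beta)}(n)$. Your spot checks (the constant $43^{2}=1849$, the coefficient $-\tfrac{13}{366}$ of $\sigma_{3}(n)$, and the term $(\tfrac{1}{24}-\tfrac{1}{176}n)\sigma(n)$ for $(1,44)$) agree with the stated formulae.
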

\begin{proof} We prove the case $W_{(4,13)}(n)$ as the other cases 
are proved similarly.

We compare the right hand side of  
\autoref{convolSum-eqn-4_13} with that of 
\autoref{evalConvolClass-eqn-11} when we have set $(\alpha,\beta)=(4,13)$, namely  
 \begin{multline*} 
 \sum_{n=1}^{\infty}\biggl(\ 3840\,\sigma_{3}(\frac{n}{4}) 
    + 40560\,\sigma_{3}(\frac{n}{13}) 
    + 192\,(13 - 6\,n)\,\sigma(\frac{n}{4})   \\
    + 624\,(4 - 6\,n)\,\sigma(\frac{n}{13}) 
    - 59904\,W_{(4,13)}(n)\,\biggr)q^{n} =  \\
 \sum_{n=1}^{\infty}\biggl(\, 
    \frac{3066144}{1243}\,\sigma_{3}(n)
- \frac{240061230672}{6064597}\,\sigma_{3}(\frac{n}{2})   
  +   \frac{139392}{41}\,\sigma_{3}(\frac{n}{4})  \\
  +  \frac{45798672}{1243}\,\sigma_{3}(\frac{n}{13}) 
 - \frac{53922031824}{6064597}\,\sigma_{3}(\frac{n}{26}) 
 +   \frac{20290176}{41}\,\sigma_{3}(\frac{n}{52})  \\
   - \frac{3066144}{1243}\,b_{1}(n)  
+  \frac{212735819880}{6064597}\,b_{2}(n)
+  \frac{251848851024}{6064597}\,b_{3}(n)  
- \frac{400561037808}{6064597}\,b_{4}(n)  \\
- \frac{5152459820400}{6064597}\,b_{5}(n)
- \frac{5408748312192}{6064597}\,b_{6}(n)  
 - \frac{5489355312}{1243}\,b_{7}(n)  \\
 +    \frac{150336}{41}\,b_{8}(n)
    - 7488\,b_{9}(n)   
+  \frac{151016538432}{147917}\,b_{10}(n) 
    - 27456\,b_{11}(n)   \\
- \frac{8224832431680}{6064597}\,b_{12}(n)  
    - 17472\,b_{13}(n)  
   - \frac{544896}{41}\,b_{14}(n)  
 - \frac{11115614088}{551327}\,b_{15}(n)  \\
+ \frac{2056953609600}{6064597}\,b_{16}(n)  
 - \frac{64745693328}{6064597}\,b_{17}(n)  
    - \frac{2304}{41}\,b_{18}(n)
\,\biggr)q^{n} 
\end{multline*}
We obtain the stated result when we solve for $W_{(4,13)}(n)$.
\end{proof}

%%%%%%%%%%% Number of representation of n %%%%%

\section{Number of Representations of a positive Integer  $n$  by the Octonary 
Quadratic Form using  $W_{(\alpha,\beta)}(n)$ when $\alpha\beta=44,52$
}
\label{representations_44_52}

Let $n\in\mathbb{N}_{0}$ and then assune that $r_{4}(n)$ denote the number of representations
of $n$ by the quaternary quadratic form  $x_{1}^{2} +x_{2}^{2}+x_{3}^{2} +
x_{4}^{2}$ which is defined by  
\begin{equation*}
r_{4}(n)=\text{card}(\{(x_{1},x_{2},x_{3},x_{4})\in\mathbb{Z}^{4}~|~ n = x_{1}^{2} 
+x_{2}^{2} + x_{3}^{2} + x_{4}^{2}\}).
\end{equation*}
Obviously $r_{4}(0) = 1$. The Jacobi's identity 
\begin{equation}
\forall n\in\mathbb{N}\qquad  r_{4}(n) = 8\sigma(n) - 32\sigma(\frac{n}{4}). 
\label{representations-eqn-4-1}
\end{equation}
is proved in \ksW ' book \cite[Thrm 9.5, p.\ 83]{williams2011}; it will be 
very useful in the following.  

Let $a,b\in\mathbb{N}$ and let $N_{(a,b)}(n)$ denote the number of 
representations of $n$ by the octonary quadratic form 
\begin{equation*}
a\,(x_{1}^{2} +x_{2}^{2} + x_{3}^{2} + x_{4}^{2})
+ b\,(x_{5}^{2} + x_{6}^{2} + x_{7}^{2} + x_{8}^{2})
\end{equation*}
which is defined by 
\begin{align*}
N_{(a,b)}(n) 
=\text{card}
(\{(x_{1},x_{2},x_{3},x_{4},x_{5},x_{6},x_{7},x_{8})\in\mathbb{Z}^{8}~|~
n = a\,( x_{1}^{2} +x_{2}^{2}  \\
    + x_{3}^{2} + x_{4}^{2} ) + 
b\,( x_{5}^{2} +x_{6}^{2} + x_{7}^{2} + x_{8}^{2}) \}).
\end{align*}
We then deduce the following result.
\begin{theorem} \label{representations-thrm_3_4}
Let $n\in\mathbb{N}$ and $(a,b)=(1,11),(1,13)$. Then  
\begin{align*}
N_{(1,11)}(n)  = & 
8\sigma(n) - 32\sigma(\frac{n}{4}) + 8\sigma(\frac{n}{11}) -
32\sigma(\frac{n}{44}) \\ &
 + 64\, W_{(1,11)}(n) + 1024\, W_{(1,11)}(\frac{n}{4}) 
 - 256\, \biggl( W_{(4,11)}(n) + W_{(1,44)}(n) \biggr), \\ 
% \end{align*}
% \begin{align*}
N_{(1,13)}(n)  = & 
8\sigma(n) - 32\sigma(\frac{n}{4}) + 8\sigma(\frac{n}{13}) -
32\sigma(\frac{n}{52}) \\ &
 + 64\, W_{(1,13)}(n) + 1024\, W_{(1,13)}(\frac{n}{4}) 
 - 256\, \biggl( W_{(4,13)}(n) + W_{(1,52)}(n) \biggr). 
\end{align*}
\end{theorem}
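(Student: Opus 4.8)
The plan is to read $N_{(a,b)}(n)$ as a convolution of four-square representation numbers and then collapse everything onto the convolution sums $W_{(\alpha,\beta)}$ via Jacobi's identity \autoref{representations-eqn-4-1}. A representation of $n$ by the octonary form is exactly the datum of a representation of some $l\in\mathbb{N}_{0}$ by $x_{1}^{2}+x_{2}^{2}+x_{3}^{2}+x_{4}^{2}$ together with a representation of some $m\in\mathbb{N}_{0}$ by $x_{5}^{2}+x_{6}^{2}+x_{7}^{2}+x_{8}^{2}$, subject to $al+bm=n$. Hence
\begin{equation*}
N_{(a,b)}(n)=\sum_{\substack{(l,m)\in\mathbb{N}_{0}^{2}\\ al+bm=n}}r_{4}(l)\,r_{4}(m),
\end{equation*}
which is equally the coefficient of $q^{n}$ in $\varphi(q^{a})^{4}\varphi(q^{b})^{4}$, where $\varphi(q)=\sum_{j\in\mathbb{Z}}q^{j^{2}}$. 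I will carry out the argument for $(a,b)=(1,11)$; the case $(1,13)$ is identical with $11,44$ replaced by $13,52$.

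The delicate point is that \autoref{representations-eqn-4-1} computes $r_{4}(k)$ only for $k\in\mathbb{N}$, whereas the convolution also involves $r_{4}(0)=1$, while $8\sigma(0)-32\sigma(0)=0$. I therefore isolate the terms with $l=0$ or $m=0$ before substituting Jacobi's identity. Since $a=1$, the term $m=0$ forces $l=n$ and contributes $r_{4}(n)$, the term $l=0$ forces $11m=n$ and contributes $r_{4}(\tfrac{n}{11})$, and the term $l=m=0$ is absent because $n\geq 1$. On the remaining range $l,m\geq 1$ both factors satisfy Jacobi's identity; moreover, because $\sigma$ vanishes at arguments that are not positive integers, re-extending the sum back to $(l,m)\in\mathbb{N}_{0}^{2}$ introduces no new contribution. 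Thus
\begin{equation*}
N_{(1,11)}(n)=r_{4}(n)+r_{4}(\tfrac{n}{11})+\sum_{\substack{(l,m)\in\mathbb{N}_{0}^{2}\\ l+11m=n}}\bigl(8\sigma(l)-32\sigma(\tfrac{l}{4})\bigr)\bigl(8\sigma(m)-32\sigma(\tfrac{m}{4})\bigr).
\end{equation*}

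Expanding the product produces four convolution sums. The first is $64\,W_{(1,11)}(n)$. In the two cross terms the reindexings $m=4m'$ and $l=4l'$ turn $\sum\sigma(l)\sigma(\tfrac{m}{4})$ into $W_{(1,44)}(n)$ and $\sum\sigma(\tfrac{l}{4})\sigma(m)$ into $W_{(4,11)}(n)$, each with coefficient $-256$; the last term, under $l=4l'$ and $m=4m'$, becomes $1024\,W_{(1,11)}(\tfrac{n}{4})$. Replacing the boundary contribution $r_{4}(n)+r_{4}(\tfrac{n}{11})$ by its Jacobi expansion $8\sigma(n)-32\sigma(\tfrac{n}{4})+8\sigma(\tfrac{n}{11})-32\sigma(\tfrac{n}{44})$ yields precisely the claimed formula for $N_{(1,11)}(n)$, and the analogous computation gives $N_{(1,13)}(n)$. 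Substituting the explicit evaluations of $W_{(4,11)},W_{(1,44)}$ (resp.\ $W_{(4,13)},W_{(1,52)}$) from \autoref{convolSum-theor-w_44_52}, together with the previously known $W_{(1,11)}$ (resp.\ $W_{(1,13)}$), then makes the right-hand sides fully explicit.

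The only genuine subtlety is the boundary bookkeeping: forgetting that $r_{4}(0)=1$ while Jacobi's formula returns $0$ there would drop exactly the pure divisor terms $8\sigma(n)-32\sigma(\tfrac{n}{4})+8\sigma(\tfrac{n}{11})-32\sigma(\tfrac{n}{44})$. The rest of the work—the four reindexings and the collection of coefficients—is routine, with the dilated argument $\tfrac{n}{4}$ in the $l=m=4(\cdot)$ term being the one place where care is needed.
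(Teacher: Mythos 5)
Your proposal is correct and follows essentially the same route as the paper's own proof: split off the $l=0$ and $m=0$ boundary terms (where $r_{4}(0)=1$ but Jacobi's formula gives $0$), apply the identity $r_{4}(k)=8\sigma(k)-32\sigma(\tfrac{k}{4})$ to the interior sum, expand, and reindex the four resulting sums as $W_{(1,11)}(n)$, $W_{(4,11)}(n)$, $W_{(1,44)}(n)$ and $W_{(1,11)}(\tfrac{n}{4})$. Your explicit attention to the boundary bookkeeping, and your observation that the convention $\sigma(0)=0$ lets one re-extend the interior sum to $\mathbb{N}_{0}^{2}$, even catches a typo in the paper's proof, where $\sigma(\tfrac{n}{52})$ appears in place of $\sigma(\tfrac{n}{44})$ for the $(1,11)$ case.
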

\begin{proof} We only prove $N_{(1,11)}(n)$ since that for $N_{(1,13)}(n)$ 
is done similarly. 

From the definition of $N_{(1,11)}(n)$ it follows that 
\begin{align*}
N_{(1,11)}(n)  =  
\sum_{\substack{
{(l,m)\in\mathbb{N}_{0}^{2}} \\ {l+11\,m=n}
 }}r_{4}(l)r_{4}(m)  
   =  r_{4}(n)r_{4}(0) + r_{4}(0)r_{4}(\frac{n}{11})  
   + \sum_{\substack{
{(l,m)\in\mathbb{N}^{2}} \\ {l+11\,m=n}
 }}r_{4}(l)r_{4}(m).
\end{align*}
We apply \autoref{representations-eqn-4-1} to derive 
\begin{align*}
N_{(1,11)}(n)  = & 
8\sigma(n) - 32\sigma(\frac{n}{4}) + 8\sigma(\frac{n}{11}) -
32\sigma(\frac{n}{52}) \\ &
   + \sum_{\substack{
{(l,m)\in\mathbb{N}^{2}} \\ {l+11\,m=n}
  }} (8\sigma(l) - 32\sigma(\frac{l}{4}))(8\sigma(m) - 32\sigma(\frac{m}{4})). 
\end{align*}
From this previous identity we observe that 
\begin{align*}
(8\sigma(l) - 32\sigma(\frac{l}{4}))(8\sigma(m) - 32\sigma(\frac{m}{4}))  = &
64\sigma(l)\sigma(m) - 256\sigma(\frac{l}{4})\sigma(m) \\ &
   - 256\sigma(l)\sigma(\frac{m}{4})  + 1024\sigma(\frac{l}
   {4})\sigma(\frac{m}{4}).
\end{align*}
\eR\ \cite[Thrm 1.3]{royer} has shown the evaluation of 
\begin{equation*}
W_{(1,11)}(n) = \sum_{\substack{
{(l,m)\in\mathbb{N}^{2}} \\ {l+11\,m=n}
 }}\sigma(l)\sigma(m).
\end{equation*}
When we assign $4l$ to $l$, then we infer  
\begin{equation*}
W_{(4,11)}(n) = \sum_{\substack{
{(l,m)\in\mathbb{N}^{2}} \\ {l+11\,m=n}
}}\sigma(\frac{l}{4})\sigma(m) 
 = \sum_{\substack{
{(l,m)\in\mathbb{N}^{2}} \\ {4\,l+11\,m=n}
 }}\sigma(l)\sigma(m).
\end{equation*}
The evaluation of $W_{(4,11)}(n)$ is given in
\autoref{convolSum-theor-w_4_11}. 
When we next assign $4m$ to $m$, we conclude  that 
\begin{equation*}
W_{(1,44)}(n) = \sum_{\substack{
{(l,m)\in\mathbb{N}^{2}} \\ {l+11\,m=n}
}}\sigma(l)\sigma(\frac{m}{4})  = \sum_{\substack{
{(l,m)\in\mathbb{N}^{2}} \\ {l+44\,m=n}
 }}\sigma(l)\sigma(m).
\end{equation*}
 The evaluation of $W_{(1,44)}(n)$ is  provided by 
\autoref{convolSum-theor-w_1_44}. 
When we simultaneously assign $4l$ to $l$ and $4m$ to $m$, we deduce that 
\begin{equation*}
\sum_{\substack{
{(l,m)\in\mathbb{N}^{2}} \\ {l+11\,m=n}
}}\sigma(\frac{l}{4})\sigma(\frac{m}{4}) 
= \sum_{\substack{
{(l,m)\in\mathbb{N}^{2}} \\ {l+11\,m=\frac{n}{4}}
 }}\sigma(l)\sigma(m)
 = W_{(1,11)}(\frac{n}{4}).
\end{equation*}
Again, \eR\ \cite[Thrm 1.3]{royer} has proved the evaluation 
of $W_{(1,11)}(n)$. 

We then bring these evaluations together to obtain the stated result 
for $N_{(1,11)}(n)$. 
\end{proof}

%\bibliographystyle{abbrv}
%\bibliography{abbreviations,convolutionSums}

%%%%%%%%%%%%%%%%%%%%%%%%%%%%%%%%%%%
%%                               %%
%% Tables                        %%
%%                               %%
%%%%%%%%%%%%%%%%%%%%%%%%%%%%%%%%%%%

%% Use of \listoftables is discouraged.
%%
\section*{Tables}

\begin{longtable}{|r|r|r|} \hline 
\textbf{$(\alpha,\beta)$}  &  \textbf{Authors}   &  \textbf{References}  \\ \hline
(1,1)  &  M.~Besge, J.~W.~L.~Glaisher, & ~ \\ 
 ~  & S.~Ramanujan  & \cite{besge,glaisher,ramanujan} \\ \hline
(1,2),(1,3),(1,4)  & J.~G.~Huard \& Z.~M.~Ou & ~ \\ 
 ~ &  \& B.~K.~Spearman \& K.~S.~Williams   & \cite{huardetal} \\ \hline
(1,5),(1,7)  & M.~Lemire \& K.~S.~Williams, & ~ \\ 
  ~  &  S.~Cooper \& P.~C.~Toh   & \cite{lemire_williams,cooper_toh} \\ \hline
(1,6),(2,3)  & S.~Alaca \& K.~S.~Williams   & \cite{alaca_williams} \\ \hline
(1,8), (1,9)  & K.~S.~Williams   & \cite{williams2006, williams2005} \\ \hline
(1,10), (1,11),(1,13), &  ~ & ~ \\ 
 (1,14)  & E.~Royer   & \cite{royer} \\ \hline
(1,12),(1,16),(1,18), &  ~ & ~ \\ 
(1,24),(2,9),(3,4), & A.~Alaca \& S.~Alaca \& K.~S.~Williams   &
\cite{alaca_alaca_williams2006,alaca_alaca_williams2007,alaca_alaca_williams2007a,alaca_alaca_williams2008}
\\ 
 (3,8)  &  ~ & ~ \\ \hline
(1,15),(3,5)  & B.~Ramakrishman \& B.~Sahu   & \cite{ramakrishnan_sahu} \\ \hline
(1,20),(2,5),(4,5)  & S.~Cooper \& D.~Ye   & \cite{cooper_ye2014} \\ \hline
(1,23)  & H.~H.~Chan \& S.~Cooper   & \cite{chan_cooper2008} \\ \hline
(1,25)  & E.~X.~W.~Xia \& X.~L.~Tian & ~ \\ 
 ~  &  \& O.~X.~M.~Yao   & \cite{xiaetal2014} \\ \hline
(1,27),(1,32)  & S.~Alaca \& Y.~Kesicio$\check{g}$lu   & \cite{alaca_kesicioglu2014} \\ \hline
(1,36),(4,9)  & D.~Ye   & \cite{ye2015} \\ \hline
(1,22),(1,26),(2,7), &  ~ & ~ \\   
(2,11),(2,13)  & \aA\ \& \sA\ \& \eN  & \cite{alaca_alaca_ntienjem2016} \\ \hline
\caption{Known convolution sums $W_{(\alpha, \beta)}(n)$} \label{introduction-table-1}
\end{longtable}

\begin{longtable}{|r|r|r|} \hline 
\textbf{$(a,b)$}  &  \textbf{Authors}   &  \textbf{References}  \\ \hline
(1,1),(1,3), & ~  & ~ \\
(1,9),(2,3) & \aA\ \& \sA\ \& \eN  & \cite{alaca_alaca_ntienjem2016} \\ \hline 
(1,2)  & K.~S.~Williams   & \cite{williams2006} \\ \hline
(1,4)  & A.~Alaca \& S.~Alaca \& K.~S.~Williams   & \cite{alaca_alaca_williams2007} \\ \hline
(1,5)  & S.~Cooper \& D.~Ye   & \cite{cooper_ye2014} \\ \hline
(1,6)  & B.~Ramakrishman \& B.~Sahu   & \cite{ramakrishnan_sahu}  \\ \hline
(1,8)  & S.~Alaca \& Y.~Kesicio$\check{g}$lu   & \cite{alaca_kesicioglu2014} \\ \hline
\caption{Known representations of $n$ by the form \autoref{introduction-eq-1}
} 
\label{introduction-table-rep2}
\end{longtable}

\begin{longtable}{|c|cccccc|} \hline
   & \textbf{1}  &  \textbf{2}  & \textbf{4} & \textbf{13} & \textbf{26} & \textbf{52}  \\ \hline
 \textbf{1}  & 1 & 5 & 0 & 3 & -1 & 0  \\ \hline 
 \textbf{2}  & 3 & 3 & 0 & 1 & 1 & 0  \\ \hline 
 \textbf{3}  & 1 & 3 & 0 & 3 & 1 & 0  \\ \hline 
 \textbf{4}  & 3 & 1 & 0 & 1 & 3 & 0  \\ \hline 
 \textbf{5}  & 1 & 1 & 0 & 3 & 3 & 0  \\ \hline 
 \textbf{6}  & 3 & -1 & 0 & 1 & 5 & 0  \\ \hline 
 \textbf{7}  & 1 & -1 & 0 & 3 & 5 & 0  \\ \hline 
 \textbf{8}  & 0 & 3 & 1 & 0 & 1 & 3  \\ \hline 
 \textbf{9}  & 2 & 1 & 1 & -2 & 3 & 3  \\ \hline 
 \textbf{10}  & 0 & 1 & 1 & 0 & 3 & 3  \\ \hline 
 \textbf{11}  & 2 & -1 & 1 & -2 & 5 & 3  \\ \hline 
 \textbf{12}  & 0 & 3 & -1 & 0 & 1 & 5  \\ \hline 
 \textbf{13}  & 2 & 1 & -1 & -2 & 3 & 5  \\ \hline 
 \textbf{14}  & 0 & 1 & -1 & 0 & 3 & 5  \\ \hline 
 \textbf{15}  & -1 & 5 & 0 & 5 & -1 & 0  \\ \hline 
 \textbf{16}  & 0 & -1 & 5 & 0 & 5 & -1  \\ \hline 
 \textbf{17}  & 7 & -3 & 0 & -3 & 7 & 0  \\ \hline 
 \textbf{18}  & 0 & 7 & -3 & 0 & -3 & 7  \\ \hline
\caption{Power of $\eta$-functions being basis elements for $S_{4}(\Gamma_{0}(52))$}
\label{convolutionSums-4_13-table}
\end{longtable}

\begin{longtable}{|c|cccccc|} \hline
   & \textbf{1}  &  \textbf{2}  & \textbf{4} & \textbf{11} & \textbf{22} & \textbf{44}  \\ \hline
\textbf{1} & 6  & -2  & 0  & 6  & -2  & 0   \\ \hline 
\textbf{2} &  4  & 0  & 0  & 4  & 0  & 0   \\  \hline 
\textbf{3} &  2  & 2  & 0  & 2  & 2  & 0   \\ \hline 
\textbf{4} & 0  & 4  & 0  & 0  & 4  & 0   \\ \hline 
\textbf{5}  & -2  & 6  & 0  & -2  & 6  & 0   \\ \hline 
\textbf{6}  & 0  &  2  & 2  & 0  & 2  & 2   \\ \hline 
\textbf{7}  & 0  & -3  & 5  & 0  & 5  & 1   \\ \hline 
\textbf{8}  & 0  & 0  & 4  & 0  & 0  & 4   \\ \hline 
\textbf{9}  & 3  & 0  & 1  & -1  & 0  & 5   \\ \hline 
\textbf{10}  & 0  & -2  & 6  & 0  & -2  & 6   \\ \hline 
\textbf{11}  & 1  & -3  & 4  & -3  & 5  & 4   \\ \hline 
\textbf{12}  & 2  & 0  & 0  & 2  & -4  & 8   \\ \hline 
\textbf{13}  & 0  & 2  & 0  & 0  & -2  & 8   \\ \hline 
\textbf{14}  & -3  & 9  & 0  & 1  & 1  & 0   \\ \hline
\textbf{15}  & 0  & 0  & 2  & 0  & -4  & 10   \\ \hline
\caption{Power of $\eta$-functions being basis elements for $S_{4}(\Gamma_{0}(44))$}
\label{convolutionSums-4_11-table}
\end{longtable}

\end{document}